\numberwithin{equation}{section}
\providecommand{\abs}[1]{\lvert #1\rvert}
\newcommand{\nc}{\newcommand}
\nc{\vb}{\mathbf{v}}
\nc{\bx}{\mathbf{x}}
\nc{\by}{\mathbf{y}}
\nc{\bz}{\mathbf{z}}
\nc{\bu}{\mathbf{u}}
\nc{\bv}{\mathbf{v}}
\nc{\ba}{\mathbf{a}}
\nc{\bs}{\mathbf{s}}
\nc{\bq}{\mathbf{q}}
\nc{\bd}{\mathbf{d}}
\nc{\bb}{\mathbf{b}}
\nc{\bc}{\mathbf{c}}
\nc{\bi}{\mathbf{i}}
\nc{\bfr}{\mathbf{r}}
\nc{\bP}{\mathbf{P}}
\nc{\bQ}{\mathbf{Q}}
\nc{\R}{\mathbb R}
\nc{\N}{\mathbb N}
\nc{\bbC}{\mathbb C}
\nc{\D}{\mathbb D}
\nc{\Z}{\mathbb Z}
\nc{\F}{\mathbf F}
\nc{\bbS}{\mathbb S}
\nc{\bE}{\mathbf E}
\nc{\B}{\cal B}
\nc{\br}{\bigr}
\nc{\bl}{\bigl}
\nc{\Bl}{\Bigl}
\nc{\Br}{\Bigr}
\nc{\ind}[1]{\,\mathbf{1}_{\{#1\}}\,}
\author{Anatolii A. Puhalskii\\
University of Colorado Denver and\\
 Institute for Problems in\\ Information
Transmission, Moscow
}
\date{}
\title{On the   $M_t/M_t/K_t+M_t$ queue in heavy traffic}
\institute{ \at
           University of Colorado Denver\\ and Institute for Problems\\
           in Information Transmission, Moscow\\
              \email{anatolii.puhalskii@ucdenver.edu}
}
\authorrunning{A.A.Puhalskii}
\begin{document}
\maketitle
\sloppy
\begin{abstract}
The focus of this paper is on the asymptotics of 
large-time numbers of customers    in time-periodic
Markovian many-server
queues with customer abandonment in heavy traffic. Limit 
theorems are obtained  for the periodic
 number-of-customers processes under the fluid and
diffusion scalings.
Other results concern limits 
for  general time-dependent queues  and  
 for   time-homogeneous queues in steady state.
 \end{abstract}

\section{Introduction}
Many-server queues with customer abandonment
have  been the subject of extensive research,  the primary
motivation coming from modelling call centres, see, e.g., Garnett, Mandelbaum, and Reiman
\cite{GarManRei02}, Whitt \cite{Whi04,Whi05}, Zeltyn and Mandelbaum
\cite{ZelMan05}, and references therein.  
Those papers testify to the importance of the asymptotics where both
the arrival  rate and the number of servers tend to
infinity, their ratio being maintained,  whereas the service and abandonment
rates are kept fixed. Most studied is the case of
  Poisson
arrival processes and exponential service and abandonment times where the
arrival, service, and abandonment rates, and the number of servers 
do not vary with time. Fleming, Simon, and Stolyar \cite{FleSimSto94},
assuming   critical loading, 
 obtain  
diffusion-scale limit  theorems for the stationary number of customers.
  Garnett, Mandelbaum, and Reiman
\cite{GarManRei02}, also for the critical load,
 derive fluid- and diffusion-scale limits   for the
number-of-customers
and virtual-waiting-time
processes, and for the stationary distributions of those processes. 
Their other results  are concerned  with limits for  the
stationary fractions of abandoning
customers and of customers who have to wait in the queue, as well as with
computing expectations of functions of the waiting time.
Similar asymptotics for 
the overloaded case are obtained in  Whitt \cite{Whi04}, who
assumes a finite waiting room,
 and Talreja
and Whitt \cite{TalWhi09}. In addition,
 Whitt \cite{Whi04} provides insight into the case where the number of
 servers is much greater than the abandonment rate. 
Talreja and Whitt \cite{TalWhi09}  also give a
proof of  the virtual-waiting-time-process limit 
for the critically loaded queue. 
 The Markovian
assumptions are relaxed in Zeltyn and Mandelbaum
\cite{ZelMan05} who study steady-state waiting times. 
A  general framework of Markovian stochastic processing systems with
time-varying rates is studied by Mandelbaum, Massey, and Reiman
\cite{ManMasRei98} who 
obtain fluid- and diffusion-scale  limits  for the number-of-customers
 processes. 
They  do not require  certain loading conditions to hold.
The application to many-server queues with abandonment
is explored in  a series of papers
 by Mandelbaum, Massey, Reiman,
 and Stolyar who consider time-varying rates, allow the possibility of
 retrials, and incorporate virtual-waiting-time processes,   see, e.g.,
Mandelbaum, Massey, Reiman,
Rider, and Stolyar \cite{ManMasReiRidSto02} and references therein.

The purpose of this paper is 
  a study of 
Markovian many-server queues with customer abandonment in heavy traffic
 for  a time-periodic case
where the arrival, abandonment, and service rates, and the
 number of servers can be modelled as jointly periodic functions of
 time. Under those hypotheses, the large-time  distributions of the
 numbers of customers are
 periodic.
The main result of this paper states that 
the large-time distributions of the properly scaled and normalised
numbers of customers  converge
 to the periodic distribution of a
limiting diffusion process which arises as a particular case of the results of
 Mandelbaum, Massey, and Reiman
\cite{ManMasRei98}. The convergence of the periodic one-dimensional
distributions is further extended to  convergence of the periodic
processes.
 The method of proof consists in establishing
convergence of the number-of-customers processes and  in checking the tightness of the
stationary  distributions of  embedded discrete-time Markov chains. 
That makes the results of
 Mandelbaum, Massey, and Reiman
\cite{ManMasRei98}  essential.
Unfortunately, the
 proofs there contain flaws, as specified in Remark \ref{re:flaw} below.
Therefore, before embarking on the analysis of the large-time
behaviour, I provide a separate  proof of the heavy traffic
convergence in distribution of the number-of-customers processes 
in  many-server queues with
time-varying rates and abandonment. Unlike the proof of
 Mandelbaum, Massey, and Reiman \cite{ManMasRei98}, who invoke  the strong
approximation techniques,  the proof here relies on
 the  martingale theory of weak convergence which seems to be more
 suitable for this sort of result. An overview of the general approach
 and the related literature
 as well as a heavy-traffic analysis of 
 the time-homogeneous many-server queue
 with abandonment in critical loading
can
 be found in Whitt, Pang, and Talreja \cite{Whi07}. 
The part dealing with tightness relies on
 bounds on the first and second moments of the numbers of customers 
which are uniform over time and  may be of interest in
their own right. 
The approach used can be traced back to Liptser and Shiryayev
\cite[Theorem 8.3.2]{lipshir} and
 Smorodinskii \cite{MR838384}.
Along with the
 application to the
 periodic case, I use 
the convergence of the processes and the moment bounds
in order  to establish convergence  of the stationary
number of customers   in the time-homogeneous case for all three possible loads:
supercritical, critical, and 
 subcritical. On the one hand, this provides 
a unified treatment of and a different perspective on the results of 
Fleming, Simon, and Stolyar \cite{FleSimSto94}, Garnett, Mandelbaum, and Reiman
\cite{GarManRei02}, and Whitt \cite{Whi04} on the limits of the
stationary number of customers. On the other hand, not only are the limits for the
one-dimensional stationary distributions  obtained, but also limits
for the stationary versions of the corresponding processes.
In addition, it is shown that allowing
 the abandonment and service rates to depend on the scaling parameter gives rise
 to  extra  terms in the limit distributions.

The rest of the paper is organised as follows. In
Section~\ref{sec:heavy-traffic-limit}, the  results on the convergence
of the number-of-customers  processes are stated and
proved (Theorem \ref{the:fluid} concerns the fluid scaling and
Theorem~\ref{the:diff} concerns the  diffusion scaling).
 Section~\ref{sec:conv-peri-stat} is concerned with the
 periodic case, the main results being presented in
 Theorem~\ref{the:conv_det_period} and Theorem~\ref{the:conv_period_diff}.
In Section~\ref{sec:stationary-solutions}, the time-homogeneous case
is considered, see Theorem~\ref{the:sta_flui} and
Theorem~\ref{the:stat}. 
The moment
bounds   are
relegated to the appendix, see Lemma~\ref{le:bound}. 
This paper is an expanded and corrected version of 
Puhalskii \cite{Puh07}.

\paragraph{Notation and conventions.}
The set of real numbers
 is denoted by $\R$, the set of nonnegative reals is denoted by
$\R_+$,    the set of natural numbers is denoted by $\N$,
and the set of whole numbers is denoted by $\mathbb{Z}_+$. For real
 numbers $x$ and $y$, $x\wedge y=\min(x,y)$, $x\vee y=\max(x,y)$,
 $x^+=x\vee0$, and $\lfloor x\rfloor$ denotes the integer part;
 $\mathbf{1}_A$ denotes the indicator function of set $A$.
A real-valued function $(f(t),\,t\in\R_+)$ is
said to be strongly
majorised by a real-valued function $(g(t),\,t\in\R_+)$ if  $f(0)\le g(0)$
and the function
$(g(t)-f(t),\,t\in\R_+)$ is nondecreasing. With a slight abuse of
notation, this relationship is
denoted by $f(t)\prec g(t)$\,.
I will say that a function $(f(t),\,t\in\R_+)$ 
is $T$-periodic, where $T>0$, if $f(t+T)=f(t)$ for
all $t$ and that a stochastic process $(X(t),\,t\in\R_+)$ is $T$-periodic
if the distributions of $(X(t+T),\,t\in\R_+)$ and 
of $(X(t),\,t\in\R_+)$ coincide.

The space of rightcontinuous $\R$-valued functions on $\R_+$ with
lefthand limits is denoted by $\D(\R_+,\R)$ and is endowed with
Skorohod's $J_1$-topology and the Borel $\sigma$-algebra.
 For a function
$(x_t,\,t\in\R_+)$ from $\D(\R_+,\R)$, $x_{t-}$ represents the lefthand limit
 at $t$ with the convention that $x_{0-}=0$
 and $\Delta x_t=x_t-x_{t-}$\,. 
All stochastic processes are assumed to have trajectories from and
are considered as random elements of  $\D(\R_+,\R)$\,. Convergence in distribution in
$\D(\R_+,\R)$ has a
standard meaning. The predictable quadratic variation process of a
locally square integrable martingale $(M_t,\,t\in\R_+)$ is denoted by 
$(\langle M\rangle_t,\,t\in\R_+)$\,. (For more
background in weak convergence theory and martingale theory,  the  reader is referred to 
Jacod
and Shiryaev \cite{jacshir} and Liptser and Shiryayev \cite{lipshir}.)
The random entities encountered in the article are defined on a complete
probability space $(\Omega,\mathcal{F},\mathbf{P})$\,.
\label{sec:introduction}

\section{Convergence of the number-of-customers processes}
\label{sec:heavy-traffic-limit}

I will  consider a sequence of $M_t/M_t/K_t+M_t$ queues indexed by
$n\in\N$. 
The $n$th queue is fed by a
Poisson process of customers of rate $\lambda^n_t$ at time $t$. 
The customers are served by one
of the $K^n_t$ servers on a FCFS basis. 
They  may  abandon the
queue after an exponentially distributed time with parameter
$\theta^n_t$ at time $t$\,. More specifically, conditioned on the arrival time
$\tau$, the distribution function of the time until abandonment is
given by $1-\exp(-\int_0^t\theta^n_{s+\tau}\,ds),\,t\in\R_+$\,.
Similarly, the service times of the customers are exponential with
parameter $\mu^n_t$ at time $t$\,.
A customer in service may be relegated to the head of the queue before
her service is complete if
the server serving the customer becomes unavailable because $K^n_t$ decreases.
In that case, the customer starts service from scratch  the
next time she enters service. (The specific policy used for choosing
the server to be removed is inconsequential for the results obtained below.)

The functions $\lambda^n_t$, $\mu^n_t$, and $\theta^n_t$ are assumed to be
$\R_+$-valued locally integrable  functions, i.e., 
$\int_0^t \lambda^n_s\,ds<\infty$, 
$\int_0^t \mu^n_s\,ds<\infty$, and $\int_0^t \theta^n_s\,ds<\infty$ for
all $t\in\R_+$\,. The
functions $K^n_t$ are $\R_+$-valued and Lebesgue measurable.
 The  number of customers present at time $0$, the arrival
process, the service times, and the abandonment times are mutually
independent.

Let $A^n_t$ denote  the number of customer arrivals by time $t$. As
mentioned, the
process $A^n=(A^n_t,\,t\in\R_+)$ is
a Poisson process with time-varying  rate $\lambda^n_t$\,. Customer
abandonment will be modelled via independent Poisson processes
$R^{n,i}=(R^{n,i}_t,\,t\in\R_+),\,i\in\N,$ of rate $\theta^n_t$ at
time $t$ and customer service  will
be modelled via independent Poisson processes $B^{n,i}=(B^{n,i}_t,\,t\in\R_+),\,i\in\N,$ of
rate $\mu^n_t$ at time $t$. Let $Q^n_t$ represent the  number of customers present at
time $t$.
The evolution of the customer population is modelled by the following
equation
\begin{equation}
  \label{eq:1}
  Q^n_t=Q^n_0+A^n_t-\sum_{i=1}^\infty \int_0^t \ind{Q^n_{s-}\ge
K^n_{s-}+  i}\,dR^{n,i}_s- \sum_{i=1}^\infty
\int_0^t\ind{Q^n_{s-}\wedge K^n_{s-}\ge i}\,dB^{n,i}_s\,.
\end{equation}
For an explanation, the third term on the right represents the number
of customers who have
abandoned the queue by time $t$ and the last term represents the number of
 service completions by time $t$\,.
Informally, all customers in service are arranged in 
order and the $i$th
customer is assigned  Poisson process $B^{n,i}$\,. A jump of
$B^{n,i}$ triggers a service completion. Once that occurs, the
customers in service are reordered and are assigned possibly
different  processes $B^{n,i}$ so that there are no gaps in the
sequence of the processes $B^{n,i}$ being used.  Due to the memoryless property
 of the exponential distribution, this reassignment does not
affect the service time distributions.
The indicator function in the
fourth term equals one if and only if a jump of $B^{n,i}$ triggers a
service completion, so the jump of that term at time $t$ equals
$\sum_{i=1}^{Q^n_{t-}\wedge K^n_{t-}}\Delta B^{n,i}_t$ which is the
number of the  processes $B^{n,i}$ ``being used'' that jump at time $t$\,.
(One may want to keep in mind that at most one of these processes
jumps at any given time a.s.)
The processes $R^{n,i}$  are associated with
 the abandonment process in a similar fashion.
(Equation (\ref{eq:1}) also applies to nonFCFS
service disciplines so long as service is performed when there are
customers present. Besides, the customers whose service is interrupted
due to a lack of servers do not have to be put
necessarily at the  head of the queue. The purpose of
those assumptions is to
make the set-up more specific.)

Equation (\ref{eq:1}) has a unique strong solution whose trajectories belong to $\D(\R_+,\R_+)$ which can be shown by applying an
iterative argument on the jump times of the Poisson processes.
Let me also note that the infinite series, in fact, represent finite sums.

Let processes $M^{n,A}=(M^{n,A}_t,\,t\in\R_+)$,
$M^{n,R}=(M^{n,R}_t,\,t\in\R_+)$, and $M^{n,B}=(M^{n,B}_t,\,t\in\R_+)$
be defined by the relations
\begin{subequations}
  \begin{align}
      \label{eq:2a}
  M^{n,A}_t&=A^n_t-\int_0^t\lambda^n_s \,ds,\\
\label{eq:2b}
  M^{n,R}_t&=\sum_{i=1}^\infty \int_0^t \ind{Q^n_{s-}\ge
K^n_{s-}+  i}\,dR^{n,i}_s-\int_0^t\theta^n_s (Q^n_s-K^n_s)^+\,ds,\\
\label{eq:2c}
M^{n,B}_t&=\sum_{i=1}^\infty \int_0^t \ind{Q^n_{s-}\wedge K^n_{s-}\ge
  i}\,dB^{n,i}_s-\int_0^t \mu^n_s\,( Q^n_s\wedge K^n_s)\,ds.
\end{align}
\end{subequations}
By 
 (\ref{eq:1}), (\ref{eq:2a}), (\ref{eq:2b}), and (\ref{eq:2c}),
\begin{equation}
  \label{eq:2}
  Q^n_t=Q^n_0+
\int_0^t\lambda^n_s\,ds-\int_0^t\theta^n_s
\bl(Q^n_s-
K^n_s\br)^+\,ds-
\int_0^t\mu^n_s\, (Q^n_s\wedge K^n_s) \,ds+
\,M^{n,A}_t-M^{n,R}_t-M^{n,B}_t\,.
\end{equation}
The  following martingale characterisation plays a key role in
subsequent developments. Let $\mathcal{F}^n_t$ denote the completion with respect to
$\mathbf{P}$ of the $\sigma$-algebra generated by the random variables $Q^n_0$,
$A^n_s$, $B^{n,i}_s$, and $R^{n,i}_s$, where $s\le t$ and $i\in\N$\,. The associated filtration is denoted 
by $\mathbf{F}^n$ so that
$\mathbf{F}^n=(\mathcal{F}^n_t,\,t\in\R_+)$\,.
It may be  worth noting  that $Q^n_t$ is $\mathcal{F}^n_t$-measurable.
\begin{lemma}
  \label{le:mart}

The processes $M^{n,A}$,
$M^{n,R}$, and $M^{n,B}$
are $\mathbf{F}^n$-locally square integrable martingales with respective
predictable quadratic variation processes

\begin{subequations}
  \begin{align}
      \label{eq:3a}
  \langle M^{n,A}\rangle_t&=\int_0^t\lambda^n_s\,ds,\\
\label{eq:3b}
 \langle M^{n,R}\rangle_t&=\int_0^t\theta^n_s\, (Q^n_s-K^n_s)^+\,ds,\\
\label{eq:3c}
\langle M^{n,B}\rangle_t&=\int_0^t\mu^n_s\,( Q^n_s\wedge K^n_s)\,ds.
\end{align}
\end{subequations}
In addition,  these locally square integrable
martingales are pairwise orthogonal, i.e., their mutual
predictable characteristics are equal to zero:
\begin{equation}
  \label{eq:4}
   \langle M^{n,A},M^{n,R}\rangle_t=
\langle M^{n,A},M^{n,B}\rangle_t=\langle M^{n,R},M^{n,B}\rangle_t=0\,.
\end{equation}
\end{lemma}
\begin{proof}
According to the definition,  $M^{n,A}$ is an $\mathbf{F}^n$-martingale. Since
$\mathbf{E}(M^{n,A}_t)^2=\int_0^t\lambda^n_s\,ds<\infty$, it is a locally square integrable
martingale. One easily checks that
$\bl((M^{n,A}_t)^2-\int_0^t\lambda^n_s\,ds,\,t\in\R_+\br) $ is a
martingale. Similarly, the processes 
$(H^{n,R,i}_t,\,t\in\R_+)$ and 
$(H^{n,B,i}_t,\,t\in\R_+)$, where $H^{n,R,i}_t=R^{n,i}_t-\int_0^t\theta^n_s\,ds$ and 
$H^{n,B,i}_t=B^{n,i}_t-\int_0^t\mu^n_s\,ds$, 
are pairwise orthogonal $\mathbf{F}^n$-locally square integrable
martingales with predictable quadratic variation processes
$\langle H^{n,R,i}\rangle_t=\int_0^t\theta^n_s\,ds$ and 
$\langle H^{n,B,i}\rangle_t=\int_0^t\mu^n_s\,ds$\,,respectively.

  One can write by (\ref{eq:2b}) and (\ref{eq:2c}) that
  \begin{subequations}
    \begin{align*}
  M^{n,R}_t&=\sum_{i=1}^\infty M^{n,R,i}_t\intertext{and}
M^{n,B}_t&=\sum_{i=1}^\infty M^{n,B,i}_t ,
\end{align*}
  \end{subequations}
where 
\begin{align*}
M^{n,R,i}_t =\int_0^t \ind{Q^n_{s-}\ge 
K^n_{s-}+  i}\,(dR^{n,i}_s-\theta^n_s \,ds)\intertext{and}
M^{n,B,i}_t =\int_0^t \ind{Q^n_{s-}\wedge K^n_{s-}\ge
  i}\,(dB^{n,i}_s- \mu^n_s\, ds)  \,.
\end{align*}
As stochastic integrals with respect to locally square integrable
martingales,
the processes $M^{n,R,i}=(M^{n,R,i}_t,\,t\in\R_+)$ and
$M^{n,B,i}=(M^{n,B,i}_t,\,t\in\R_+)$ are  $\mathbf{F}^n$-locally square integrable
martingales. Their mutual predictable characteristics
are given by 
$\langle M^{n,R,i},M^{n,R,i'}\rangle_t=\int_0^t \ind{Q^n_{s-}\ge 
K^n_{s-}+  i\vee i'}\,d\langle H^{n,R,i},H^{n,R,i'}\rangle_s$\,,
$\langle M^{n,B,i},M^{n,B,i'}\rangle_t=\int_0^t \ind{Q^n_{s-}\wedge
K^n_{s-}\ge  i\vee i'}\,d\langle H^{n,B,i},H^{n,B,i'}\rangle_s$\,, and
$\langle M^{n,R,i},M^{n,B,i'}\rangle_t=\int_0^t \ind{Q^n_{s-}\ge 
K^n_{s-}+  i}\ind{Q^n_{s-}\wedge
K^n_{s-}\ge   i'}\,d\langle H^{n,R,i},H^{n,B,i'}\rangle_s$\,.
Therefore, the locally square integrable martingales 
 $M^{n,R,i}$ and
$M^{n,B,i}$ are  pairwise orthogonal 
 with respective predictable quadratic variation processes 
$(\int_0^t \ind{Q^n_{s}\ge
K^n_{s}+  i}\,\theta^n_s \,ds,\,t\in\R_+)$ and
$(\int_0^t \ind{Q^n_{s}\wedge K^n_{s}\ge
 i}\,\mu^n_s \,ds,\,t\in\R_+)$\,. The stopping times
$\tau^n_k=\inf\{t\in\R_+:\,Q^n_t\ge k\}$, where $k\in\N$, are common
localising times for these locally square integrable martingales and
$\sum_{i=1}^\infty \mathbf{E}(M^{n,R,i}_{t\wedge \tau^n_k})^2<\infty$ and
$\sum_{i=1}^\infty \mathbf{E}(M^{n,B,i}_{t\wedge
  \tau^n_k})^2<\infty$\,. It follows that, when stopped at $\tau^n_k$, the  processes 
$M^{n,R}$ and $ M^{n,B}$ are  square integrable martingales, so they
are locally square integrable martingales with respective predictable
quadratic variation
processes $(\sum_{i=1}^\infty \int_0^t \ind{Q^n_{s}\ge
K^n_{s}+  i}\,\theta^n_s \,ds,\,t\in\R_+)$ and $(\sum_{i=1}^\infty \int_0^t \ind{Q^n_{s}\wedge
K^n_{s}\ge  i}\,\mu^n_s \,ds,\,t\in\R_+)$\,. The fact that 
the locally square integrable martingales $M^{n,A}$,
$M^{n,R}$, and $M^{n,B}$ are pairwise orthogonal follows since those
processes have $\mathbf{P}$-a.s. pairwise disjoint jumps.
\end{proof}
The next theorem establishes a fluid-scale limit.
In the rest of the paper, I will assume as  fixed
$\R_+$-valued locally integrable  functions 
$(\lambda_t,\,t\in\R_+)$,  $(\mu_t,\,t\in\R_+)$, and
$(\theta_t,\,t\in\R_+)$, and an $\R_+$-valued Lebesgue
measurable function $(\kappa_t,\,t\in\R_+)$\,. Given an $\R_+$-valued
random variable $q_0$, 
let  $q_t$ be defined by the equation
\begin{equation}
  \label{eq:14}
    q_t=q_0+\int_0^t\lambda_s\,ds 
-\int_0^t\theta_s (q_s-\kappa_s)^+\,ds-\int_0^t\mu_s\,( q_s\wedge \kappa_s)\,ds.
\end{equation}
The Lipshitz continuity of $(x-\kappa_s)^+$ and of 
$x\wedge \kappa_s$ in $x$ ensures  that  the  equation  has a unique
  solution.
\begin{theorem}
  \label{the:fluid}
Suppose  that, as $n\to\infty$,  $\int_0^t\lambda^n_s/n\,ds\to\int_0^t
\lambda_s\,ds$  
for all $t$, 
that  $\mu^n_t\to\mu_t$  
uniformly on bounded intervals, 
that  $\theta^n_t\to\theta_t$  
uniformly on bounded intervals, 
 and   that $K^n_t/n\to \kappa_t$ for all $t$\,.
If the 
random variables $Q^n_0/n$ converge in distribution to a random
variable
 $q_0$ as
$n\to\infty$,
then the processes $(Q^n_t/n,\,t\in\R_+)$ converge in distribution in
$\D(\R_+,\R)$ to the process $(q_t,\,t\in\R_+)$\,.
In particular, if $q_0$ is deterministic, then
 for all $L>0$ and $\epsilon>0$,
\begin{equation*}
  \lim_{n\to\infty}\mathbf{P}(\sup_{t\in[0,L]}
\abs{\frac{Q^n_t}{n}-q_t}>\epsilon)=0\,.
\end{equation*}
\end{theorem}
\begin{proof}
Let me first assume that $q_0$ is deterministic so that the $Q^n_0/n$
converge to $q_0$ in probability.
I prove that
\begin{equation}
  \label{eq:3}
  \lim_{n\to\infty}\mathbf{P}\bl(\frac{1}{n}\,
\sup_{t\in[0,L]}\abs{M^{n,i}_t}>\epsilon)=0
\end{equation}
for $i=A,R,B$, where $L>0$ and $\epsilon>0$ are otherwise arbitrary. The
L\'englart-Rebolledo inequality, see, e.g., Liptser and Shiryayev
\cite[Theorem 1.9.3]{lipshir},
  implies that it suffices to prove that, for $t\in\R_+$,
\begin{equation}
  \label{eq:5}
  \lim_{n\to\infty}\mathbf{P}\bl(\frac{1}{n^2}\,\langle M^{n,i}\rangle_t>\epsilon)=0\,.
\end{equation}
The validity of (\ref{eq:5}) for $i=A$  follows by
(\ref{eq:3a})  and the hypothesis that
$\int_0^t\bl(\lambda^n_s/n- \lambda_s\br)\,ds\to0$\,. As a
consequence of \eqref{eq:3} for $i=A$, I have that the $A^n_t/n$ converge
in probability to $\int_0^t\lambda_s\,ds$ as $n\to\infty$ uniformly on
bounded intervals\,.
In order to establish (\ref{eq:5}) for $i=R$, I note that by
(\ref{eq:1})
$Q^n_t/n\le Q^n_0/n+A^n_t/n$. Since the latter quantities converge in
probability uniformly on bounded intervals to $q_0+\int_0^t\lambda_s \,ds$ as $n\to\infty$, it follows that 
$\limsup_{n\to\infty}
\mathbf{P}(\sup_{s\in[0,t]}Q^n_s/n>q_0+\int_0^t\lambda_s \,ds+1)\le
\limsup_{n\to\infty}
\mathbf{P}(Q^n_0/n+A^n_t/n>q_0+\int_0^t\lambda_s \,ds+1)
=0$\,. If $n$ is such that
$\sup_{s\in[0,t]}\abs{\theta^n_s-\theta_s}\le 1$, then  by (\ref{eq:3b}),
$\mathbf{P}(\langle M^{n,R}\rangle_t/n^2>\epsilon)\le
\mathbf{P}(\sup_{s\in[0,t]}(Q^n_s/n)
\int_0^t(\theta_s+1)\,ds>n\epsilon)$\,,
which implies (\ref{eq:5}) for $i=R$.
The case  $i=B$ is treated similarly.
The limits in (\ref{eq:3}) have been proved.

By (\ref{eq:2}) and
(\ref{eq:14}),
\begin{multline*}
  \abs{\frac{Q^n_t}{n}-q_t}\le
  \abs{\frac{Q^n_0}{n}-q_0}+\abs{\int_0^t\frac{\lambda^n_s}{n}\,ds-\int_0^t\lambda_s
  \,ds}+
\int_0^t (\theta^n_s+\mu^n_s)\abs{\frac{Q^n_s}{n}-q_s}\,ds\\
+\abs{\int_0^t\mu^n_s\,\bl( q_s\wedge\frac{K^n_s}{n}\br)\,ds
-\int_0^t\mu_s\, (q_s\wedge \kappa_s)\,ds}
+\abs{\int_0^t\theta^n_s (q_s-\frac{K^n_s}{n})^+\,ds
-\int_0^t\theta_s (q_s-\kappa_s)^+\,ds}\\
+\frac{1}{n}\,\abs{M^{n,A}_t}+
\frac{1}{n}\,\abs{M^{n,R}_t}+\frac{1}{n}\,\abs{M^{n,B}_t}\,.
\end{multline*}
By 
Gronwall's
 inequality, see, e.g., p.498 in Ethier and Kurtz \cite{EthKur86},
for $L>0$,
\begin{multline*}
  \sup_{t\in[0,L]}\abs{\frac{Q^n_t}{n}-q_t}\le
\bl(  \abs{\frac{Q^n_0}{n}-q_0}+
\sup_{t\in[0,L]}\abs{\int_0^t\frac{\lambda^n_s}{n}\,ds-\int_0^t\lambda_s\,ds}\\+
\sup_{t\in[0,L]}\abs{\int_0^t\mu^n_s\,\bl( q_s\wedge\frac{K^n_s}{n}\br)\,ds
-\int_0^t\mu_s\, (q_s\wedge \kappa_s)\,ds}\\
+\sup_{t\in[0,L]}\abs{\int_0^t\theta^n_s (q_s-\frac{K^n_s}{n})^+\,ds
-\int_0^t\theta_s (q_s-\kappa_s)^+\,ds}
+\frac{1}{n}\,\sup_{t\in[0,L]}\abs{ M^{n,A}_t}\\+
\frac{1}{n}\,\sup_{t\in[0,L]}\abs{M^{n,R}_t}+\frac{1}{n}\,
\sup_{t\in[0,L]}\abs{M^{n,B}_t}\br)e^{\int_0^L(\theta^n_t+\mu^n_t)\,dt}\,.
\end{multline*}
By (\ref{eq:3}) and the
hypotheses, the righthand side tends in probability to zero as $n\to\infty$.

Suppose now that $q_0$ is random.
Let $\mathbf{\Theta}_x^n$ denote the distribution on  $\D(\R_+,\R)$ of
$(Q^n_t/n,\,t\in\R_+)$ provided that $Q^n_0/n=x\in \Sigma^n$, where $\Sigma^n=
\{0,1/n,2/n,\ldots\}$,
 let $\mathbf{\Xi}^n$ represent the 
distribution of $Q^n_0/n$, and let $\mathbf{\Xi}$ represent the 
distribution of $q_0$\,. By the independence assumptions, 
 it suffices to prove that,
for a bounded continuous function $f$ on $\D(\R_+,\R)$, 
\begin{equation}
  \label{eq:13a}
    \lim_{n\to\infty}\int_{\D(\R_+,\R)\times \Sigma^n}
  f(z)\,\mathbf{\Theta}^n_{x}(dz)\,\mathbf{\Xi}^n(dx)=\int_{ \R_+}
 f(q(x))\,\mathbf{\Xi}(dx)\,,
\end{equation}
where $q(x)=(q_t,\,t\in\R_+)$ is defined by (\ref{eq:14}) with $q_0=x$\,.
 By the part just proved, if $x^n\to x$, where $x^n\in\Sigma^n$,
 and $x\in\R_+$,
then the $\mathbf{\Theta}_{x^n}^n$ weakly converge to the Dirac
measure at $q(x)$, so
\begin{equation}
  \label{eq:12a}
    \lim_{n\to\infty}\int_{\D(\R_+,\R)} f(z)\,\mathbf{\Theta}^n_{x^n}(dz)=
f(q(x))\,.
\end{equation}
Given $x\in\R_+$, let $g(x)= f(q(x))$ and 
$g^n(x)=\int_{\D(\R_+,\R)} f(z)\,\mathbf{\Theta}^n_{r(x)}(dz)$, where $r(x)$ represents the
element of $\Sigma^n$ which is closest to $x$ on the left. By
\eqref{eq:12a}, if $x^n\to x$, where $x^n\in\R_+$, then
$g^n(x^n)\to g(x)$\,. The weak convergence of the $\mathbf{\Xi}^n$ to $\mathbf{\Xi}$ implies
that
\begin{equation*}
  \lim_{n\to\infty}\int_{\R_+}g^n(x)\,\mathbf{\Xi}^n(dx)=\int_{\R_+}g(x)\,\mathbf{\Xi}(dx).
\end{equation*}
Since $\mathbf{\Xi}^n(\Sigma^n)=1$, I have that
$\int_{\R_+}g^n(x)\,\mathbf{\Xi}^n(dx)=\int_{\D(\R_+,\R)\times \Sigma^n}
  f(z)\,\mathbf{\Theta}^n_{x}(dz)\,\mathbf{\Xi}^n(dx)$, so
 \eqref{eq:13a} follows\,.

\end{proof}
\begin{corollary}
\label{co:mart}
Suppose that   the hypotheses of  Theorem~\ref{the:fluid} hold where
$q_0$ is deterministic. Then the processes
  $M^{n,A}/\sqrt{n}=(M^{n,A}_t/\sqrt{n},\,t\in\R_+)$,
  $M^{n,R}/\sqrt{n}=(M^{n,R}_t/\sqrt{n},\,t\in\R_+)$, 
and $M^{n,B}/\sqrt{n}=(M^{n,B}_t/\sqrt{n},\,t\in\R_+)$
jointly  converge in distribution in $\D(\R_+,\R)$ to the respective
  processes $M^A=(M^A_t,\,t\in\R_+)$, $M^R=(M^R_t,\,t\in\R_+)$, and
 $M^B=(M^B_t,\,t\in\R_+)$, defined as follows:
 \begin{align*}
   M^A_t&=\int_0^t\sqrt{\lambda_s}\,dW^A_s,\\
M^R_t&=\int_0^t \sqrt{\theta_s (q_s-\kappa_s)^+}\,dW^R_s,\\
M^B_t&=\int_0^t \sqrt{\mu_s\,(q_s\wedge \kappa_s)}\,dW^B_s,
 \end{align*}
where $W^A=(W^A_t,\,t\in\R_+)$, $W^R=(W^R_t,\,t\in\R_+)$, and
 $W^B=(W^B_t,\,t\in\R_+)$ are independent standard Wiener processes.
\end{corollary}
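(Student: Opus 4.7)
The plan is to apply the functional central limit theorem for locally square integrable martingales (Rebolledo's theorem, as presented in Liptser and Shiryayev \cite{lipshir}) to the three-dimensional martingale $(M^{n,A},M^{n,R},M^{n,B})/\sqrt{n}$. Two conditions must be checked: first, that the matrix of predictable (co)variation processes converges in probability, uniformly on bounded intervals, to a deterministic continuous limit; second, that the jumps of the scaled martingales are asymptotically negligible. The jump condition is immediate, since each of $M^{n,A}$, $M^{n,R}$, $M^{n,B}$ has jumps of absolute value at most $1$ (being compensated sums of unit-jump counting integrals), so the jumps of the scaled martingales are bounded by $1/\sqrt{n}\to0$.

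For the covariance, the off-diagonal entries vanish identically for every $n$ by (\ref{eq:4}), and the diagonal entries are given by (\ref{eq:3a})--(\ref{eq:3c}) divided by $n$. The $A$-component converges by hypothesis to $\int_0^t\lambda_s\,ds$. For the $R$- and $B$-components, Theorem~\ref{the:fluid} yields $Q^n_\cdot/n\to q_\cdot$ in probability uniformly on $[0,T]$; combined with the assumed uniform convergence $K^n_\cdot/n\to k_\cdot$, the continuity of the maps $x\mapsto (x-k)^+$ and $x\mapsto x\wedge k$, and the local integrability of $\theta_s$ and $\mu_s$, this implies
\begin{equation*}
\frac{1}{n}\langle M^{n,R}\rangle_t\to \int_0^t\theta_s(q_s-k_s)^+\,ds,\qquad
\frac{1}{n}\langle M^{n,B}\rangle_t\to \int_0^t\mu_s\,q_s\wedge k_s\,ds,
\end{equation*}
in probability, uniformly in $t\in[0,T]$. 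The integrands are uniformly bounded on $[0,T]$ with probability tending to $1$, using the bound $Q^n_t/n\le Q^n_0/n+A^n_t/n$ already invoked in the proof of Theorem~\ref{the:fluid}, so a dominated-convergence argument justifies passage under the integral.

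It follows that $(M^{n,A},M^{n,R},M^{n,B})/\sqrt{n}$ converges in distribution in $\D(\R_+,\R^3)$ to a continuous centered Gaussian martingale whose quadratic characteristic matrix is diagonal with the three entries just displayed. The three components of the limit are therefore mutually independent continuous Gaussian martingales with deterministic quadratic variation, and each admits the representation $\int_0^t\sqrt{c_s}\,dW_s$ against an independent standard Wiener process (by the standard time-change of Brownian motion), yielding precisely the processes $M^A$, $M^R$, $M^B$ of the statement. The substantive step in the argument is the convergence of $n^{-1}\langle M^{n,R}\rangle$ and $n^{-1}\langle M^{n,B}\rangle$, which is where Theorem~\ref{the:fluid} is essential; the rest is routine bookkeeping inside the martingale CLT.
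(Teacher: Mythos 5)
Your proposal is correct and follows essentially the same route as the paper: both verify the hypotheses of the martingale functional central limit theorem from Liptser and Shiryayev (convergence of the predictable quadratic characteristics via Theorem~\ref{the:fluid}, vanishing of the mutual characteristics by (\ref{eq:4}), and the $1/\sqrt{n}$ bound on the jumps), and then identify the limit as the stated stochastic integrals against independent Wiener processes. The extra care you take with uniform boundedness of the integrands and dominated convergence is a reasonable elaboration of the same argument, not a different approach.
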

\begin{proof}
  The processes $M^{n,A}/\sqrt{n}$, $M^{n,R}/\sqrt{n}$, and
  $M^{n,B}/\sqrt{n}$ are $\mathbf{F}^n$-locally square integrable
  martingales. By (\ref{eq:3a}), (\ref{eq:3b}),  (\ref{eq:3c}), and 
(\ref{eq:4}) they are mutually orthogonal and
  their respective predictable quadratic variation processes are given by
  \begin{align*}
  \langle \frac{M^{n,A}}{\sqrt{n}}\rangle_t&=\int_0^t
\frac{\lambda^n_s}{n}\, ds,\\
 \langle\frac{M^{n,R}}{\sqrt{n}}\rangle_t&=\int_0^t \theta^n_s
\bl(\frac{Q^n_s}{n}-\frac{K^n_s}{n}\br)^+\,ds,\\
\langle\frac{M^{n,B}}{\sqrt{n}}\rangle_t&=\int_0^t \mu^n_s\,
\bl(\frac{Q^n_s}{n}\wedge \frac{K^n_s}{n}\br)\,ds.
\end{align*}
By Theorem~\ref{the:fluid} and the hypotheses, 
the random variables  on the right converge in
probability to the functions $\int_0^t\lambda_s\,ds$, 
$\int_0^t\theta_s (q_s-\kappa_s)^+\,ds$, and $\int_0^t \mu_s\,
(q_s\wedge \kappa_s)\,ds$, respectively. (Actually the first convergence is
deterministic.)
I also have by (\ref{eq:2a}), (\ref{eq:2b}), and (\ref{eq:2c}) that
the jumps of the processes $M^{n,A}/\sqrt{n}$, $M^{n,R}/\sqrt{n}$, and
  $M^{n,B}/\sqrt{n}$ are not greater than $1/\sqrt{n}$\,.
The proof is finished by an application of Theorem 7.1.4
 in Liptser and Shiryayev \cite{lipshir}.
\end{proof}
Let me introduce
\begin{subequations}
  \begin{align}
    \label{eq:35}
\alpha^n_t&=    \sqrt{n}\bl(\frac{\lambda^n_t}{n}-\lambda_t\br),\\
    \label{eq:35b}
\beta^n_t&=    \sqrt{n}\bl(\mu^n_t-\mu_t\br),\\
    \label{eq:35c}
\gamma^n_t&=    \sqrt{n}\bl(\theta^n_t-\theta_t\br),\intertext{and}
    \label{eq:35a}
\delta^n_t&=    \sqrt{n}\bl(\frac{K^n_t}{n}-\kappa_t\br)\,.  \end{align}
\end{subequations}
Let  processes $X^n=(X^n_t,\,t\in\R_+)$ be defined by
\begin{equation}
  \label{eq:6}
  X^n_t=\sqrt{n}\bl(\frac{Q^n_t}{n}-q_t\br)\,.  
\end{equation}
 By (\ref{eq:2}) and (\ref{eq:14}),  I can write
  \begin{multline}
          \label{eq:7}
X^n_t=X^n_0+\int_0^t\alpha^n_s\,ds
-\int_0^t\theta^n_s\Bl(\bl(X^n_s+ \sqrt{n} q_s-(\delta^n_s+
\sqrt{n}\kappa_s)\br)^+
-\sqrt{n}(q_s-\kappa_s)^+\Br)\,ds\\-
\int_0^t\mu^n_s\Bl(\bl(X^n_s+ \sqrt{n} q_s\br)\wedge \bl(\delta^n_s+
\sqrt{n}\kappa_s\br)-
\sqrt{n}\,(q_s\wedge \kappa_s)\Br)\,ds
-\int_0^t\gamma^n_s(q_s-\kappa_s)^+\,ds
\\-
\int_0^t\beta^n_s\,(q_s\wedge \kappa_s)\,ds
+\frac{1}{\sqrt{n}}\,M^{n,A}_t-
\frac{1}{\sqrt{n}}\,M^{n,R}_t-\frac{1}{\sqrt{n}}\,M^{n,B}_t\,.    
  \end{multline}
In the rest of the paper,   $(\alpha_t,\,t\in\R_+)$, 
$(\beta_t,\,t\in\R_+)$, and $(\gamma_t,\,t\in\R_+)$ represent
  locally 
integrable functions and 
$(\delta_t,\,t\in\R_+)$ represents a locally bounded Lebesgue measurable
 function.

The following theorem yields a diffusion-scale limit.
\begin{theorem}
  \label{the:diff}
Let the hypotheses  of Theorem~\ref{the:fluid} hold where $q_0\in\R_+$ is
deterministic. 
Suppose   that $\int_0^t\alpha^n_s\,ds
\to\int_0^t \alpha_s\,ds$,
 $\beta^n_t
\to \beta_t$, $\gamma^n_t
\to \gamma_t$, and $\delta^n_t
\to \delta_t$  uniformly on bounded
intervals as $n\to\infty$, 
and that the random variables $X^n_0$ converge in distribution to a random variable $X_0$ as
$n\to\infty$\,. Then the processes $X^n$
 converge in
distribution in $\D(\R_+,\R)$ to the process $X=(X_t\,,t\in\R_+)$ that
is the solution of the equation
\begin{multline*}
  X_t=X_0+\int_0^t\bl(\alpha_s-\gamma_s(q_s-\kappa_s)^+-\beta_s(q_s\wedge\kappa_s)\br)\,ds-\int_0^t\theta_s
\bl(\ind{q_s>\kappa_s}(X_s-\delta_s)
+\ind{q_s=\kappa_s} (X_s-\delta_s)^+\br)\,ds\\-\int_0^t\mu_s
\bl(\ind{q_s<\kappa_s}X_s+\ind{q_s=\kappa_s}(X_s\wedge \delta_s)+
\ind{q_s>\kappa_s}\delta_s\br)\,ds+
\int_0^t\sqrt{\lambda_s+\theta_s (q_s-\kappa_s)^++\mu_s\,(q_s\wedge \kappa_s)}\,dW_s,
\end{multline*}
where $W=(W_t,\,t\in\R_+)$ is a standard Wiener process and $W$ and
$X_0$ are independent.
\end{theorem}
\begin{proof}
  The equation for $X$ has a unique strong solution by the  fact
  that the   infinitesimal drift coefficients are Lipshitz continuous,
  the functions $(\lambda_s,\,s\in\R_+)$, $(\mu_s,\,s\in\R_+)$, 
  $(\theta_s,\,s\in\R_+)$,   $(\alpha_s,\,s\in\R_+)$, 
$(\beta_s,\,s\in\R_+)$, and $(\gamma_s,\,s\in\R_+)$ are  locally 
integrable, and
  the function $(\delta_s,\,s\in\R_+)$ is locally bounded, see, e.g.,
  Ikeda and Watanabe \cite{IkeWat}.

Let me first consider the case of deterministic  $X^n_0$, so
$X_0^n=x^n\in S^n$, where  $S^n$ represents 
the set  of numbers of the form $\sqrt{n}(m/n-q_0)$ for
$m\in \mathbb{Z}_+$, and $x^n\to x\in\R$ as $n\to\infty$\,.
By \eqref{eq:7},
\begin{multline*}
  \abs{X^n_t}\le\abs{x^n}+ 
\abs{\int_0^t\alpha^n_s \,ds}+
\int_0^t(\theta^n_s+\mu^n_s)\abs{X^n_s}\,ds
+\int_0^t(\theta^n_s+\mu^n_s)\abs{\delta^n_s}\,ds\\
+\int_0^t\abs{\gamma^n_s}q_s\,ds
+\int_0^t\abs{\beta^n_s}\,q_s\,ds+
\frac{1}{\sqrt{n}}\,\abs{M^{n,A}_t}+
\frac{1}{\sqrt{n}}\,\abs{M^{n,R}_t}+\frac{1}{\sqrt{n}}\,\abs{M^{n,B}_t}\,.
\end{multline*}
Gronwall's inequality,
 the hypotheses of Theorem~\ref{the:diff}, and
Corollary~\ref{co:mart} imply that for $L>0$
\begin{equation}
  \label{eq:8}
  \lim_{r\to\infty}\limsup_{n\to\infty}
\mathbf{P}(\sup_{t\in[0,L]}\abs{X^n_t}>r)=0\,.
\end{equation}
Also, for $s\le t$,
\begin{multline*}
  \abs{X^n_t-X^n_s}\le 
\abs{\int_s^t\alpha^n_u\,du}+
\int_s^t(\theta^n_u+\mu^n_u)\abs{X^n_u}\,du
+\int_s^t(\theta^n_u+\mu^n_u)\abs{\delta^n_u}\,du\\+
\int_s^t\abs{\gamma^n_u}q_u\,du
+\int_s^t\abs{\beta^n_u}\,q_u\,du+
\frac{1}{\sqrt{n}}\,\abs{M^{n,A}_t-M^{n,A}_s}+
\frac{1}{\sqrt{n}}\,\abs{M^{n,R}_t-M^{n,R}_s}
+\frac{1}{\sqrt{n}}\,\abs{M^{n,B}_t-M^{n,B}_s}\,.
\end{multline*}
Given $L>0$, $\eta>0$, and $r>0$\,, 
\begin{multline*}
  \mathbf{P}(\sup_{s,t\in[0,L]:\,\abs{s-t}<\delta}\abs{X^n_t-X^n_s}>\eta)\le
\mathbf{P}(\sup_{t\in[0,L]}\abs{X^n_t}>r)\\+
\mathbf{P}\bl(\sup_{s,t\in[0,L]:\,\abs{s-t}<\delta}\bl(\abs{\int_s^t\alpha^n_u\,du}+
r\int_s^t(\theta^n_u+\mu^n_u)\,du
+\int_s^t(\theta^n_u+\mu^n_u)\abs{\delta^n_u}\,du\\+
\int_s^t\abs{\gamma^n_u}q_u\,du
+\int_s^t\abs{\beta^n_u}\,q_u\,du+
\frac{1}{\sqrt{n}}\,\abs{M^{n,A}_t-M^{n,A}_s}\\+
\frac{1}{\sqrt{n}}\,\abs{M^{n,R}_t-M^{n,R}_s}
+\frac{1}{\sqrt{n}}\,\abs{M^{n,B}_t-M^{n,B}_s}\br)>\eta\br)\,.
\end{multline*}
Hence, by  Corollary~\ref{co:mart} and the
hypotheses, 
\begin{multline*}
  \limsup_{n\to\infty}
\mathbf{P}(\sup_{s,t\in[0,L]:\,\abs{s-t}<\delta}\abs{X^n_t-X^n_s}>\eta)\le
  \limsup_{n\to\infty}\mathbf{P}(\sup_{t\in[0,L]}\abs{X^n_t}>r)\\+
\mathbf{P}\bl(\sup_{s,t\in[0,L]:\,\abs{s-t}<\delta}\bl(
\abs{\int_s^t\alpha_u\,du}+
r\int_s^t(\theta_u+\mu_u)\,du
+\int_s^t(\theta_u+\mu_u)\abs{\delta_u}\,du\\+
\int_s^t\abs{\gamma_u}q_u\,du
+\int_s^t\abs{\beta_u}\,q_u\,du+
\abs{M^{A}_t-M^{A}_s}+
\abs{M^{R}_t-M^{R}_s}
+\abs{M^{B}_t-M^{B}_s}\br)>\frac{\eta}{2}\br)
\,.
\end{multline*}
By the continuity of the processes $M^A$, $M^B$, and $M^R$, and
absolute continuity of the Lebesgue integral, the limit of the second
probability on the right, as $\delta\to0$, equals zero, so
\begin{equation*}
\limsup_{\delta\to0}    \limsup_{n\to\infty}
\mathbf{P}(\sup_{s,t\in[0,L]:\,\abs{s-t}<\delta}\abs{X^n_t-X^n_s}>\eta)\le
  \limsup_{n\to\infty}\mathbf{P}(\sup_{t\in[0,L]}\abs{X^n_t}>r)\,.
\end{equation*}
By (\ref{eq:8}), 
the righthand side can be made arbitrarily small by choosing $r$ great enough.
Therefore,
\begin{equation*}
  \lim_{\delta\to0}\limsup_{n\to\infty}
\mathbf{P}(\sup_{s,t\in[0,L]:\,\abs{s-t}<\delta}\abs{X^n_t-X^n_s}>\eta)=0\,.
\end{equation*}
It follows that  the sequence $X^n$ is $\mathbb{C}$-tight, i.e., it is tight for
convergence in distribution in $\D(\R_+,\R)$, and all limit
points are continuous-path processes. Let $\tilde{X}=(\tilde{X}_t,\,t\in\R_+)$ represent  a
subsequential limit of the $X^n$. 

Let me note  that if a sequence of functions
$(x^n_t,\,t\in\R_+)$ from $\D(\R_+,\R)$ converges for  Skorohod's $J_1$-topology to a continuous function
$(x_t,\,t\in\R_+)$ as $n\to\infty$, then 
\begin{equation}
  \label{eq:15}
  \int_0^t\theta^n_s\bl(\bl(x^n_s+ \sqrt{n} q_s-
(\delta^n_s+
\sqrt{n}\kappa_s)\br)^+
-\sqrt{n}(q_s-\kappa_s)^+\br)\,ds\to \int_0^t\theta_s\bl(\ind{q_s>\kappa_s}
(x_s-\delta_s)+
\ind{q_s=\kappa_s}(x_s-\delta_s)^+\br)\,ds
\end{equation}
and
\begin{equation}
  \label{eq:16}
  \int_0^t\mu^n_s\bl(\bl(x^n_s+ \sqrt{n} q_s\br)\wedge (\delta^n_s+
\sqrt{n}\kappa_s)-
\sqrt{n}\,(q_s\wedge \kappa_s)\br)\,ds\to
\int_0^t\mu_s
\bl(\ind{q_s<\kappa_s}x_s+\ind{q_s=\kappa_s}(x_s\wedge \delta_s)
+\ind{q_s>\kappa_s}\delta_s\br)\,ds\,.
\end{equation}
To see \eqref{eq:15}, one could first note that 
$\theta^n_s\bl(\bl(x^n_s+ \sqrt{n} q_s-
(\delta^n_s+
\sqrt{n}\kappa_s)\br)^+
-\sqrt{n}(q_s-\kappa_s)^+\br)\to \theta_s\bl(\ind{q_s>\kappa_s}
(x_s-\delta_s)+
\ind{q_s=\kappa_s}(x_s-\delta_s)^+\br)$ for each $s$, for if 
$q_s>\kappa_s$, then 
$\bl(x^n_s+ \sqrt{n} q_s-
(\delta^n_s+
\sqrt{n}\kappa_s)\br)^+
-\sqrt{n}(q_s-\kappa_s)^+\br)=
\bl(x^n_s+ \sqrt{n} q_s-
(\delta^n_s+
\sqrt{n}\kappa_s)\br)
-\sqrt{n}(q_s-\kappa_s)\br)=x^n_s-
\delta^n_s$ for all $n$ great  enough, if
$q_s=\kappa_s$, then 
$\bl(x^n_s+ \sqrt{n} q_s-
(\delta^n_s+
\sqrt{n}\kappa_s)\br)^+
-\sqrt{n}(q_s-\kappa_s)^+\br)=(x^n_s-
\delta^n_s)^+$, and 
if 
$q_s<\kappa_s$, then 
$\bl(x^n_s+ \sqrt{n} q_s-
(\delta^n_s+
\sqrt{n}\kappa_s)\br)^+
-\sqrt{n}(q_s-\kappa_s)^+\br)=0$ for all $n$ great  enough.
Since
$\abs{\theta^n_s\bl(\bl(x^n_s+ \sqrt{n} q_s-
(\delta^n_s+
\sqrt{n}\kappa_s)\br)^+
-\sqrt{n}(q_s-\kappa_s)^+\br)}\le
\theta^n_s\abs{x^n_s-\delta^n_s}$, the convergence in \eqref{eq:15}
follows by Lebesgue's dominated convergence theorem. The argument for
\eqref{eq:16} is similar.

On recalling
Corollary~\ref{co:mart}, I conclude from (\ref{eq:7}) and the
continuous mapping principle that 
$\tilde{X}$ must satisfy the equation
\begin{multline*}
  \tilde{X}_t=x+\int_0^t\alpha_s\,ds-\int_0^t\theta_s
\bl(\ind{q_s>\kappa_s}
(\tilde{X}_s-\delta_s)+\ind{q_s=\kappa_s}(\tilde{X}_s-\delta_s)^+\br)\,ds
-\int_0^t\gamma_s(q_s-\kappa_s)^+\,ds\\
-\int_0^t\mu_s
\bl(\ind{q_s<\kappa_s}\tilde{X}_s+\ind{q_s=\kappa_s}(\tilde{X}_s\wedge \delta_s)+
\ind{q_s>\kappa_s}\delta_s\br)\,ds
-\int_0^t\beta_s(q_s\wedge \kappa_s)\,ds\\+
\int_0^t\sqrt{\lambda_s+\theta_s( q_s-\kappa_s)^+
+\mu_s\,(q_s\wedge \kappa_s)}\,d\tilde{W}_s\,,
\end{multline*}
where $(\tilde{W}_t,\,t\in\R_+)$ is a standard Wiener process. Since the latter
equation has a unique  solution, $\tilde{X}$ coincides in law
with $X$, so the $X^n$ converge in distribution to $X$\,.

I will now consider the case of general $X^n_0$\,. The argument is
similar to the one used in the proof  of Theorem~\ref{the:fluid}.
Let $\mathbf{\Phi}_x^n$ denote the distribution on  $\D(\R_+,\R)$ of
$X^n$ provided that $X^n_0=x\in S^n$,
let $\mathbf{\Phi}_x$ denote the distribution on  $\D(\R_+,\R)$ of
$X$ provided that $X_0=x\in\R$\,, let $\mathbf{\Psi}^n$ denote the 
distribution of $X^n_0$, and let $\mathbf{\Psi}$ denote the 
distribution of $X_0$\,. By the independence assumptions, 
 it suffices to prove that,
for a bounded continuous function $f$ on $\D(\R_+,\R)$, 
\begin{equation}
  \label{eq:13}
    \lim_{n\to\infty}\int_{\D(\R_+,\R)\times S^n}
  f(z)\,\mathbf{\Phi}^n_{x}(dz)\,\mathbf{\Psi}^n(dx)=\int_{\D(\R_+,\R)\times \R}
 f(z)\,\mathbf{\Phi}_x(dz)\,\mathbf{\Psi}(dx)\,.
\end{equation}
 By the part just proved, if $x^n\to x$, where $x^n\in S^n$ and $x\in\R$,
then the $\mathbf{\Phi}_{x^n}^n$ weakly converge to $\mathbf{\Phi}_x$\,, so
\begin{equation}
  \label{eq:12}
    \lim_{n\to\infty}\int_{\D(\R_+,\R)} f(z)\,\mathbf{\Phi}^n_{x^n}(dz)=\int_{\D(\R_+,\R)} f(z)\,\mathbf{\Phi}_x(dz)\,.
\end{equation}
Given $x\in\R$, let $g(x)=\int_{\D(\R_+,\R)} f(z)\,\mathbf{\Phi}_{x}(dz)$ and 
$g^n(x)=\int_{\D(\R_+,\R)} f(z)\,\mathbf{\Phi}^n_{r(x)}(dz)$, where $r(x)$ represents the
element of $S^n$ which is closest to $x$ on the left. By
\eqref{eq:12}, if $x^n\to x$, where $x^n\in\R$ and $x\in\R$, then
$g^n(x^n)\to g(x)$\,. The weak convergence of the $\mathbf{\Psi}^n$ to $\mathbf{\Psi}$ implies
that
\begin{equation*}
  \lim_{n\to\infty}\int_{\R}g^n(x)\,\mathbf{\Psi}^n(dx)=\int_{\R}g(x)\,\mathbf{\Psi}(dx).
\end{equation*}
Since $\mathbf{\Psi}^n(S^n)=1$, I have that
$\int_{\R}g^n(x)\,\mathbf{\Psi}^n(dx)=\int_{\D(\R_+,\R)\times S^n}
  f(z)\,\mathbf{\Phi}^n_{x}(dz)\,\mathbf{\Psi}^n(dx)$, so
 \eqref{eq:13} follows\,.
\end{proof}
\begin{remark}
\label{re:flaw}
The assertions of Theorem~\ref{the:fluid} and
  Theorem~\ref{the:diff} are 
contained in  Theorem 2.2 and 
Theorem 2.3, respectively, in 
 Mandelbaum, Massey, and Reiman
\cite{ManMasRei98}, albeit under slightly stronger
hypotheses. However, 
 the proof of 
Lemma 9.3 there depends on the erroneous claim  that if a
 sequence of nonnegative random variables defined on the same
 probability space is tight, then it has a
 finite limit superior a.s. There are also problems with establishing
 the martingale property in the proof of Lemma 9.1.
\end{remark}

\section{Convergence of the  periodic  queue lengths}
\label{sec:conv-peri-stat}
In this section, I will assume that the functions $\lambda^n_t$, $\mu^n_t$,
$\theta^n_t$,  $K^n_t$, $\lambda_t$, $\mu_t$, $\theta_t$, and
$\kappa_t$  are $T$-periodic, where  $T>0$\,.
I will also assume that
\begin{align}
    \label{eq:17}
\int_0^T\lambda_s\,ds&>0\,\intertext{and}
  \label{eq:28}
  \int_0^T(\mu_s\wedge \theta_s)\,ds&>0\,.
\end{align}
In the long term, one expects  a  periodic pattern to emerge
 for the number of customers.  The next lemma confirms that to
be the case.
Let $Q^{n,\ell }=(Q^n_{\ell T+t},\,t\in\R_+)$, where
$\ell \in\Z_+$\,.
The sequence $\{Q^{n,\ell },\,\ell\in\Z_+\}$ is a 
discrete-time homogeneous Markov process with values in $\D(\R_+,\R)$\,.

\begin{lemma}
  \label{le:period_stat}
Suppose that   $\int_0^T(\mu^n_s\wedge \theta^n_s)\,ds>0$\,.
As $\ell \to\infty$, given an arbitrarily distributed $Q^n_0$,
 the sequence of the distributions of the processes
 $Q^{n,\ell }$ converges in the 
distance 
of total variation
in $\D(\R_+,\R)$
 to the distribution of a process 
$\breve{Q}^n=(\breve{Q}^n_{t},\,t\in\R_+)$\,, which
  is a unique $T$-periodic  Markov process with the same transition
probability function as $Q^n$\,.  
The distribution of $(\breve{Q}^n_t,\,t\in\R_+)$  is a stationary
initial distribution for  
 $\{Q^{n,\ell},\,\ell\in\mathbb{Z}_+\}$\,.
\end{lemma}
\begin{remark}
  For the definition of the distance  of total variation, see,
 e.g., p.274 in Jacod and Shiryaev
\cite{jacshir}.
\end{remark}
\begin{proof}
If $\int_0^T\lambda^n_s\,ds=0$, then $A^n_t=0$ for all $t\in\R_+$, so
$0$ is an absorbing state for $Q^n$ and $Q^n_t\to0$ in the distance of total variation as
$t\to\infty$\,, so $\breve{Q}^n_{t}=0$\,.

Suppose that $\int_0^T\lambda^n_s\,ds>0$\,.
Then the sequence $\{Q^n_{\ell T},\,\ell \in\Z_+\}$ is a
time-homogeneous, irreducible and aperiodic discrete-time Markov chain.
One can show as follows that it converges
  in the distance of total variation to
  a unique stationary distribution as $\ell\to\infty$.
It suffices to prove that the chain is positive
 recurrent, which, by Foster's criterion, will follow if, for some $N\in\Z_+$, 
 \begin{equation}
   \label{eq:43}
   \mathbf{E}_xQ^n_T\le x-1\text{ for all }x\in\{N+1,N+2,\ldots\}\,,
 \end{equation}
where $ \mathbf{E}_x$ denotes expectation with respect to the
probability measure $\mathbf{P}_x$ such that
$\mathbf{P}_x(Q^n_0=x)=1$\,, see, e.g., Theorem 11.3.4 on p.265 and
Proposition 13.2.4 on p.319 in  Meyn and Tweedie \cite{MeyTwe93}, or
Theorem 2.2.3 on p.29 in Fayolle, Malyshev, and Men'shikov
\cite{FayMalMen95}.
Since $Q^n_t\le  Q^n_0+A^n_t$ by (\ref{eq:1}),  
 $\mathbf{E}_xQ^n_t \le
x+\int_0^t\lambda^n_s\,ds$\,. By Lemma~\ref{le:mart}, 
the processes $M^{n,A}$, $M^{n,R}$, 
and $M^{n,B}$ are $\mathbf{F}^n$-locally square
integrable martingales under $\mathbf{P}_x$ with
respective predictable quadratic variation processes
$(\int_0^t\lambda^n_s\,ds\,,t\in\R_+)$, 
$(\int_0^t\theta^n_s\, (Q^n_s-K^n_s)^+\,ds,\,t\in\R_+)$, and
$(\int_0^t\mu^n_s\, (Q^n_s\wedge
K^n_s)\,ds,\,t\in\R_+)$\,. Since the latter processes are of finite
expectation,  $\mathbf{E}(M^{n,A}_t)^2<\infty$,
$\mathbf{E}(M^{n,R}_t)^2<\infty$, and
$\mathbf{E}(M^{n,B}_t)^2<\infty$\,. 
In particular,  
the processes $M^{n,A}$,
$M^{n,R}$, and $M^{n,B}$
are $\mathbf{F}^n$-martingales, so by (\ref{eq:2}),
\begin{equation*}
     - \mathbf{E}_xQ^n_t\prec -x+\int_0^t(\theta^n_s\vee\mu^n_s)\mathbf{E}_xQ^n_s\,ds \,,
\end{equation*}which implies by Lemma~\ref{le:maj} that
\begin{equation*}
  \inf_{t\le T}\mathbf{E}_xQ^n_t\ge xe^{-\int_0^T(\mu^n_s\vee\theta^n_s)\,ds}\,.
\end{equation*}
By \eqref{eq:2},
\begin{equation*}
   \mathbf{E}_xQ^n_T\le x+
\int_0^T\lambda^n_s\,ds-\int_0^T(\mu^n_s\wedge\theta^n_s)
\,\mathbf{E}_xQ^n_s\,ds\le x+
\int_0^T\lambda^n_s\,ds-xe^{-\int_0^T(\mu^n_s\vee\theta^n_s)\,ds}\int_0^T\mu^n_s\wedge\theta^n_s
\,ds\,.
\end{equation*}
Therefore, (\ref{eq:43}) holds if 
\begin{equation*}
  N\ge e^{\int_0^T(\mu^n_s\vee\theta^n_s)\,ds}\,\frac{\displaystyle 1+\int_0^T\lambda^n_s\,ds}{\displaystyle\int_0^T\mu^n_s\wedge\theta^n_s
\,ds }\,.
\end{equation*}
Thus, the distributions of $Q^n_{\ell T}$ converge in the distance of total variation to a limit
distribution as $\ell\to\infty$. Since the
transition probability function of $Q^{n,\ell}$ is periodic, 
 the finite-dimensional distributions of $Q^{n,\ell}$ converge in
the distance of total variation to  limit
distributions as $\ell\to\infty$.
Since the Borel and cylindrical $\sigma$-algebras on $\D(\R_+,\R)$
coincide,
it follows that the distributions of the Markov processes $Q^{n,\ell}
$ converge
in the distance of total variation to the distribution of a  process
 $\breve{Q}^n=(\breve{Q}^n_{t},\,t\in\R_+)$, which is a
  $T$-periodic Markov  process with the same
transition probability function as $Q^n$\,. Since the limiting
distribution of the $Q^n_{\ell T}$ is specified uniquely, the
distribution of  $\breve{Q}^n$ is specified uniquely.  Since the sequence 
$\{ \breve{Q}^n_{\ell T}\,,\ell\in\Z_+\}$ is stationary and  the
processes $(\breve{Q}^n_{\ell T+t}\,,t\in\R_+)$ are Markov processes
with the same transition probability function, it follows that the sequence 
$\{(\breve{Q}^n_{\ell T+t}\,,t\in\R_+),\,\ell\in\Z_+\}$ is stationary.
\end{proof}
\begin{remark}
  Note that the sequence $\{Q^{n,\ell}
,\,\ell\in\Z_+\}$ is deterministic once the initial
  condition $(Q^n_{t}\,,t\in\R_+)$ has been chosen.
\end{remark}
My next step is to consider periodic regimes for the deterministic
approximation.
\begin{lemma}
  \label{le:determ_period}
  \begin{enumerate}
  \item 
There exists a unique $q_0\in\R_+$ such that
the function 
$(q_t,\,t\in\R_+)$ defined by
equation (\ref{eq:14})
is $T$-periodic.
 An arbitrary solution 
 converges to this periodic
 solution as $t\to\infty$.
\item If $q_0$ is a random variable such that the process 
$(q_t,\,t\in\R_+)$ is $T$-periodic, then $q_0$ is deterministic and 
has the value specified in part 1.
  \end{enumerate}
\end{lemma}
\begin{proof}

By uniqueness,
 no two  solutions have a point in
common. In particular, if $q_0'>q_0$, then for the corresponding
solutions, $q_t'>q_t$ for all $t\in\R_+$.  Given $q_0$,
there are three
  possibilities: either $ q_T=q_0$, or $q_T>q_0$,
  or $q_T<q_0$\,. If $ q_T=q_0$, then the solution
  starting at $q_0$ is a periodic solution.
Suppose that $ q_T>q_0$\,. Then on taking $q_T$ as
a new initial condition, by periodicity,
$q_{t+T}>q_t$ for all $t\in\R_+$, so $q_{2T}>q_T$.
Continuing on, I obtain an
increasing  sequence
of solutions $(q_{\ell T+t},\,t\in\R_+)$, where
$\ell=0,1,2,\ldots$\,.
By part 1(a) of Lemma~\ref{le:bound} found in the appendix, 
$    \sup_{t\in\R_+}  q_t<\infty\,$,
so there exists a limit
of $q_{\ell T+t}$ as $\ell\to\infty$\,. I denote this limit by
$\breve{q}_t$\,. Since $q_{\ell T}\to\breve{q}_0$ and 
$q_{\ell T+T}\to\breve{q}_T$, $(\breve{q}_t,\,t\in\R_+)$ is a 
$T$-periodic function.
By bounded convergence, it is also a solution.
If $ q_T<q_0$, then $(q_{\ell T+t},\,t\in\R_+)$
is a monotonically decreasing
sequence of functions  converging to a $T$-periodic solution.

To show the uniqueness of a $T$-periodic solution, note that if 
$(\breve{q}_t,\,t\in\R_+)$ is
a $T$-periodic solution, then $\breve{q}_0=\breve{q}_T$, 
so $\int_0^T \lambda_s\,ds=
\int_0^T\theta_s (\breve q_s-\kappa_s)^+\,ds+\int_0^T\mu_s\,( \breve q_s\wedge
\kappa_s)\,ds$\,. Now, if $(q'_t,\,t\in\R_+)$ is a solution with
$q_0'>\breve q_0$, then $q'_t>\breve q_t$ for all $t$, 
so on recalling \eqref{eq:28},
\begin{equation*}
  \int_0^T\theta_s (q'_s-\kappa_s)^+\,ds+\int_0^T\mu_s\, (q'_s\wedge
\kappa_s)\,ds>\int_0^T\theta_s (\breve q_s-\kappa_s)^+\,ds
+\int_0^T\mu_s\, (\breve q_s\wedge
\kappa_s)\,ds=\int_0^T \lambda_s\,ds,
\end{equation*}
which implies that $q_T'<q'_0$. Similarly, if $q_0'<\breve q_0$, then
$q'_T>q'_0$\,.
Thus, $(q'_t,\,t\in\R_+)$ is not $T$-periodic.
Part 1 is proved.

Let 
$(q_t,\,t\in\R_+)$ represent a $T$-periodic process. The  reasoning
used to show the uniqueness of a $T$-periodic solution
shows that 
$\abs{q_T-\breve{q}_0}<\abs{q_0-\breve{q}_0}$ when $q_0\not=\breve{q}_0$\,. 
Since the distributions of 
$\abs{q_T-\breve{q}_0}$ and $\abs{q_0-\breve{q}_0}$ are the same,
 $q_0=\breve{q}_0$ a.s.
\end{proof}
In what follows,  $(\breve{q}_t,\,t\in\R_+)$ represents the $T$-periodic solution of Lemma~\ref{le:determ_period}.
\begin{theorem}
  \label{the:conv_det_period}
Suppose  that, as $n\to\infty$,  $\int_0^t\lambda^n_s/n\,ds\to\int_0^t
\lambda_s\,ds$  
for all $t$, 
that  $\mu^n_t\to\mu_t$  
uniformly on bounded intervals, 
that  $\theta^n_t\to\theta_t$  
uniformly on bounded intervals, 
 and   that $K^n_t/n\to \kappa_t$ for all $t$\,.
Then, for all $\epsilon>0$ and $L>0$,
\begin{equation*}
  \lim_{n\to\infty}\mathbf{P}(\sup_{t\in[0,L]}\abs{\frac{\breve{Q}^n_t}{n}-\breve{q}_t}>\epsilon)=0\,.
\end{equation*}
\end{theorem}
\begin{proof}
Since $\breve{Q}^n_0$ is a limit in distribution of the $Q^n_t$ as
$t\to\infty$, by part 1(b) of 
Lemma~\ref{le:bound} (with $Q^n_0=0$), 
the sequence $\{\breve{Q}^n_0/n,\,n\in\N\}$
is tight. (Note that by (\ref{eq:28}),
$\liminf_{n\to\infty}\int_0^T(\mu^n_s\wedge\theta^n_s)\,ds>0$.) 
 By Theorem~\ref{the:fluid} and Prohorov's theorem, the sequence of processes
$\{(\breve{Q}^n_t/n,\,t\in\R_+),\,n\in\N\}$ is tight and any limit
point $(q_t,\,t\in\R_+)$ is the solution  of (\ref{eq:14})  for a suitable
$q_0$\,. Since the processes $(\breve{Q}^n_t/n,\,t\in\R_+)$ are
$T$-periodic, so is the process $(q_t,\,t\in\R_+)$\,. By
Lemma~\ref{le:determ_period}, $q_t=\breve{q}_t$ a.s., 
which concludes the proof.
\end{proof}
Let 
\begin{equation}
  \label{eq:23}
  \breve{X}^n_t=\sqrt{n}\bl(\frac{\breve{Q}^n_t}{n}-\breve{q}_t\br)\,.
\end{equation}
The process $(\breve{X}^n_{t},\,t\in \R_+)$ is a  $T$-periodic Markov process.
\begin{theorem}
\label{the:conv_period_diff}
Suppose   that $\int_0^t\alpha^n_s\,ds
\to\int_0^t \alpha_s\,ds$, 
 $\beta^n_t
\to \beta_t$, $\gamma^n_t
\to \gamma_t$, and $\delta^n_t
\to \delta_t$  uniformly on bounded
intervals as $n\to\infty$. Then
the processes $(\breve{X}^n_{t},\,t\in \R_+)$ converge in
  distribution as $n\to\infty$ to  process $(\breve{X}_{t},\,t\in
  \R_+)$\,,
which is a unique $T$-periodic Markov process satisfying  the equation 
\begin{multline*}
   \breve{X}_t=\breve{X}_0+\int_0^t\bl(\alpha_s-\gamma_s(\breve{q}_s-\kappa_s)^+-\beta_s(\breve{q}_s\wedge\kappa_s)\br)\,ds-\int_0^t\theta_s
\bl(\ind{\breve{q}_s>\kappa_s}(\breve{X}_s-\delta_s)
+\ind{\breve{q}_s=\kappa_s} (\breve{X}_s-\delta_s)^+\br)\,ds
\\-\int_0^t\mu_s
\bl(\ind{\breve{q}_s<\kappa_s}\breve{X}_s+\ind{\breve{q}_s=\kappa_s}(\breve{X}_s\wedge \delta_s)+
\ind{\breve{q}_s>\kappa_s}\delta_s\br)\,ds+
\int_0^t\sqrt{\lambda_s+\theta_s (\breve{q}_s-\kappa_s)^++\mu_s\,(\breve{q}_s\wedge \kappa_s)}\,d\breve{W}_s,
\end{multline*}
where $(\breve{W}_t,\,t\in\R_+)$ is a standard Wiener process and
$\breve{X}_0$ and $(\breve{W}_t,\,t\in\R_+)$ are independent.
\end{theorem}
\begin{proof}
  By Lemma~\ref{le:period_stat} and Lemma~\ref{le:determ_period},  the processes
$(X^n_{\ell T+t},\,t\in\R_+)$, where $Q^n_0=q_0=0$,  converge in distribution in
$\D(\R_+,\R)$ to 
 $(\breve{X}^n_{t},\,t\in\R_+)$ as $\ell\to\infty$ and the sequence
$\{(\breve{X}^n_{\ell T+t},\,t\in\R_+),\,\ell\in\Z_+\}$ is stationary.
 By part 1(c) of Lemma~\ref{le:bound},
$ \lim_{V\to\infty}\limsup_{n\to\infty}\limsup_{t\to\infty}
\mathbf{P}(\abs{X^n_t}>V)=0.$
Therefore, the sequence $\{\breve{X}^n_{0}\,,n\in\N\}$ is
tight. By Theorem~\ref{the:diff} and Prohorov's theorem, 
the sequence 
 $\{(\breve{X}^n_{t},t\in\R_+)\,,n\in\N\}$ is tight.
Let   $(\breve{X}_t,t\in\R_+)$ represent a limit point of that
sequence for convergence in
distribution in $\D(\R_+,\R)$ as $n\to\infty$\,. As follows by
Theorem~\ref{the:diff}, it satisfies the
equation in the statement  and is a Markov process. In addition,
$\{(\breve{X}_{\ell T+t},\,t\in\R_+),\,\ell\in\Z_+\}$
is a limit point of $\{(\breve{X}^n_{\ell
  T+t},\,t\in\R_+),\,\ell\in\Z_+\}$ as $n\to\infty$ for convergence in
distribution in $\D(\R_+,\R)^{\Z_+}$\,.
Since the sequence
$\{(\breve{X}^n_{\ell T+t},\,t\in\R_+),\,\ell\in\Z_+\}$ is stationary,
so is
the sequence $\{(\breve{X}_{\ell T+t},\,t\in\R_+),\,\ell\in\Z_+\}$.
Hence, $(\breve{X}_t,t\in\R_+)$ is a $T$-periodic Markov process.

The following coupling argument  shows that the distribution of
$(\breve{X}_t,\,t\in\R_+)$ is specified uniquely and is the limit of
the distributions of  processes
$(\tilde{X}_{\ell T+t},\,t\in\R_+)$,  as $\ell\to\infty$\,, where
the process $\tilde{X}=(\tilde{X}_t,\,t\in\R_+)$ is defined by the equation
 \begin{multline*}
  \tilde{X}_t=x+\int_0^t\bl(\alpha_s-\gamma_s(\breve{q}_s-\kappa_s)^+-\beta_s(\breve{q}_s\wedge\kappa_s)\br)\,ds-\int_0^t\theta_s
\bl(\ind{\breve{q}_s>\kappa_s}(\tilde{X}_s-\delta_s)
+\ind{\breve{q}_s=\kappa_s} (\tilde{X}_s-\delta_s)^+\br)\,ds\\-\int_0^t\mu_s
\bl(\ind{\breve{q}_s<\kappa_s}\tilde{X}_s+\ind{\breve{q}_s=\kappa_s}(\tilde{X}_s\wedge \delta_s)+
\ind{\breve{q}_s>\kappa_s}\delta_s\br)\,ds+
\int_0^t\sqrt{\lambda_s+\theta_s (\breve{q}_s-\kappa_s)^++\mu_s\,(\breve{q}_s\wedge \kappa_s)}\,d\tilde{W}_s\,,
\end{multline*}
where $x\in\R$ and
 $(\tilde{W}_t,\,t\in\R_+)$ is
 a standard Wiener process.
 Let me
consider a process
 $\tilde{X}'=(\tilde{X}'_t,\,t\in\R_+)$ which starts at $y\in\R$ and 
  is driven
by the negative of the Wiener process $(\tilde{W}_t,\,t\in\R_+)$ so that
  \begin{multline*}
  \tilde{X}'_t=y+\int_0^t\bl(\alpha_s-\gamma_s(\breve{q}_s-\kappa_s)^+-\beta_s(\breve{q}_s\wedge\kappa_s)\br)\,ds-\int_0^t\theta_s
\bl(\ind{\breve{q}_s>\kappa_s}(\tilde{X}'_s-\delta_s)
+\ind{\breve{q}_s=\kappa_s} (\tilde{X}'_s-\delta_s)^+\br)\,ds\\-\int_0^t\mu_s
\bl(\ind{\breve{q}_s<\kappa_s}\tilde{X}'_s+\ind{\breve{q}_s=\kappa_s}(\tilde{X}'_s\wedge \delta_s)+
\ind{\breve{q}_s>\kappa_s}\delta_s\br)\,ds-
\int_0^t\sqrt{\lambda_s+\theta_s (\breve{q}_s-\kappa_s)^++\mu_s\,(\breve{q}_s\wedge \kappa_s)}\,d\tilde{W}_s\,.
\end{multline*}
Obviously, the distribution of $\tilde{X}'$ is the same as the
distribution of $\tilde{X}$ if the latter were started at $y$\,.
Assuming that $x>y$, I have that until $\tilde{X}$ and $\tilde{X}'$ meet,
\begin{equation*}
  \tilde{X}_t-\tilde{X}'_t\le x-y+2\int_0^t\sqrt{\lambda_s+\theta_s (\breve{q}_s-\kappa_s)^++\mu_s\,(\breve{q}_s\wedge \kappa_s)}\,d\tilde{W}_s\,.
\end{equation*}
For $\tau_{x,y}=\inf\{t:\,\tilde{X}_t=\tilde{X}'_t\}$, 
\begin{equation*}
  \mathbf{P}(\tau_{x,y}>t)\le
  \mathbf{P}(2\inf_{u\in[0,t]}\int_0^u\sqrt{\lambda_s+\theta_s
    (\breve{q}_s-\kappa_s)^++\mu_s\,(\breve{q}_s\wedge
    \kappa_s)}\,d\tilde{W}_s\ge y-x)\,.
\end{equation*}
Let $\Gamma(u)=\inf\{v:\,\int_0^v\bl(\lambda_s+\theta_s
    (\breve{q}_s-\kappa_s)^++\mu_s\,(\breve{q}_s\wedge
    \kappa_s)\br)\,ds=u\}$\,, which is finite by 
\eqref{eq:17}. Since the processes 
$(\int_0^u\sqrt{\lambda_s+\theta_s
    (\breve{q}_s-\kappa_s)^++\mu_s\,(\breve{q}_s\wedge
    \kappa_s)}\,d\tilde{W}_s,\,u\in\R_+)$ and 
$(\tilde{W}_{\int_0^u(\lambda_s+\theta_s
    (\breve{q}_s-\kappa_s)^++\mu_s\,(\breve{q}_s\wedge
    \kappa_s))\,ds},\,u\in\R_+)$ have the same distribution,
$\Gamma^{-1}(u)=\int_0^u(\lambda_s+\theta_s
    (\breve{q}_s-\kappa_s)^++\mu_s\,(\breve{q}_s\wedge
    \kappa_s))\,ds$, the random variables $(-\inf_{u\in[0,\Gamma^{-1}(t)]}
\tilde{W}_u)$ and $\abs{\tilde{W}_{\Gamma^{-1}(t)}}$ have the same distribution,
and the random variables $\tilde{W}_{\Gamma^{-1}(t)}$ and 
$\sqrt{\Gamma^{-1}(t)}\tilde{W}_{1}$ have the same distribution,
I conclude that
\begin{multline*}
  \mathbf{P}(2\inf_{u\in[0,t]}\int_0^u\sqrt{\lambda_s+\theta_s
    (\breve{q}_s-\kappa_s)^++\mu_s\,(\breve{q}_s\wedge
    \kappa_s)}\,d\tilde{W}_s\ge y-x)=\mathbf{P}(2\inf_{u\in[0,\Gamma^{-1}(t)]}
\tilde{W}_u\ge y-x)\\=\mathbf{P}(2\abs{\tilde{W}_{\Gamma^{-1}(t)}}\le x-y)=
\mathbf{P}(2\abs{\tilde{W}_1}\le (x-y)/\sqrt{\Gamma^{-1}(t)})\to0\text{ as $t\to\infty$},
\end{multline*}
It follows that 
$  \mathbf{P}(\tau_{x,y}>t)\to0$ as $t\to\infty$\,. Furthermore, the latter
convergence is uniform over $x$ and $y$ from  bounded sets.
Therefore, the $T$-periodic version of $\tilde{X}$ is unique in distribution
and one has convergence in the distance of total variation 
to the distribution of that process from an arbitrary
initial distribution. (For a sample argument, let $\nu$ denote the
distribution of 
$\breve{X}_0$, let $\tilde{\nu}$ denote a probability distribution on
$\R$,
  and let $\nu_{x,\ell}$ denote the distribution of
$(\tilde{X}_{\ell T+s},\,s\in\R_+)$ with  $\tilde{X}_0=x$, where
$\ell\in\Z_+$.
 Then, for a bounded measurable function
$f$ on $\D(\R_+,\R)$ and $U\in\R_+$,
$\abs{\int_\R\int_{\D(\R_+,\R)}f(z)\nu_{x,\ell}(dz)\tilde{\nu}(dx) -
\int_\R  \int_{\D(\R_+,\R)}f(z)\nu_{x,0}(dz)\nu(dx)}\le
2 \sup_{\abs{x}\le U}
\abs{\int_{\D(\R_+,\R)}f(z)\nu_{x,\ell}(dz)
  -\int_{\D(\R_+,\R)}f(z)\nu_{0,\ell}(dz)}+
2\sup_{x\in\R}\abs{f(x)}(\tilde{\nu}(x:\,\abs{x}>U)+\nu(x:\,\abs{x}>U))\le
2\sup_{x\in\R}\abs{f(x)}\bl(
 \sup_{\abs{x}\le U}\mathbf{P}(\tau_{x,0}>\ell T)
+
\tilde{\nu}(\abs{x}>U)+\nu(\abs{x}>U)\br)\,. 
$ The latter expression converges to zero as $\ell\to\infty$ and $U\to\infty$\,.)

\end{proof}
\begin{remark}
If the functions $(\theta_t,\,t\in\R_+)$ and $(\mu_t,\,t\in\R_+)$ are
bounded, then the existence of the
  periodic version of $\tilde{X}$ can be deduced from Theorem 5.2 on
  p.90 of Has'minskii \cite{Has80} (with $V(t,x)=x^2$). I haven't found other results in
  the literature which directly apply, the sticking point being that the
  equation coefficients are not differentiable functions of time and space.
\end{remark}

\section{Convergence of stationary distributions}
\label{sec:stationary-solutions}
In this section I will assume constant arrival, service, and abandonment
rates, so  
$\lambda^n_t=\lambda^n\ge0$,  $\theta^n_t=\theta^n>0$, 
 $\mu^n_t=\mu^n>0$, $\lambda_t=\lambda\ge0$,
$\theta_t=\theta>0$, 
 $\mu_t=\mu>0$,  $\alpha_t=\alpha$, $\beta_t=\beta$,
and  $\gamma_t=\gamma$.
The number of servers $K^n_t$ is also assumed to be constant
 which I will take as the scaling parameter $n$, 
so $\kappa_t=1$. Accordingly, $\delta_t=0$.
The equations for the fluid- and diffusion-scale limits which appear
in Theorem~\ref{the:fluid} and Theorem~\ref{the:diff} assume the
following form:
\begin{align}
    \label{eq:11}
     q_t&=q_0+\lambda t
-\int_0^t\theta (q_s-1)^+\,ds-\int_0^t\mu\, (q_s\wedge 1)\,ds,\\
      \label{eq:10}
  X_t&=X_0+\int_0^t\bl(\alpha-\gamma(q_s-1)^+-\beta(q_s\wedge1)\br)\,ds -\int_0^t\theta
\bl(\ind{q_s>1}X_s+\ind{q_s=1} X_s^+\br)\,ds
\nonumber\\&-\int_0^t\mu
\bl(\ind{q_s<1}X_s+\ind{q_s=1}X_s\wedge 0\br)\,ds+
\int_0^t\sqrt{\lambda+\theta (q_s-1)^++\mu\,(q_s\wedge 1)}\,dW_s\,.
\end{align}
First, I investigate stationary solutions of \eqref{eq:11}.
\begin{lemma}
  \label{le:stat-q}
If $\lambda\ge\mu$, then
$\lim_{t\to\infty}q_t=(\lambda-\mu)/\theta+1$. 
If $\lambda\le\mu$, then
$\lim_{t\to\infty}q_t=\lambda/\mu$.
For all $t$, $q_t\not=1$ except when $q_0=1$ and $\lambda=\mu$
in which case $q_t=1$ for all $t$\,.
 \end{lemma}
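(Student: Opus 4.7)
The plan is to read the lemma off the phase portrait of the scalar ODE derived from the fluid equation. First I would note that $(q_t)$ is $C^1$ on $\R_+$ with
\[
\dot q_t=F(q_t),\qquad F(q)=\lambda-\theta(q-1)^+-\mu(q\wedge 1),
\]
and that $F$ is Lipschitz and strictly decreasing: it is affine with slope $-\mu$ on $(-\infty,1]$ and slope $-\theta$ on $[1,\infty)$, and the two branches agree at $F(1)=\lambda-\mu$. Hence $F$ has a unique zero $q^\ast$. A direct calculation gives $q^\ast=1+(\lambda-\mu)/\theta$ when this quantity is $\ge 1$, i.e.\ when $\lambda\ge\mu$, and $q^\ast=\lambda/\mu$ when this quantity is $\le 1$, i.e.\ when $\lambda\le\mu$; the two formulas collapse to $q^\ast=1$ when $\lambda=\mu$.

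Next I would prove $q_t\to q^\ast$. Strict monotonicity of $F$ yields $(q-q^\ast)F(q)<0$ for $q\neq q^\ast$, so $|q_t-q^\ast|$ is nonincreasing and in particular $q_t$ cannot cross the equilibrium. Within each branch the ODE is linear; solving, while $q_t\ge 1$ one has $q_t=q^\ast_++(q_0-q^\ast_+)e^{-\theta t}$ with $q^\ast_+=1+(\lambda-\mu)/\theta$, and while $q_t\le 1$ one has $q_t=\lambda/\mu+(q_0-\lambda/\mu)e^{-\mu t}$. If $q_0$ lies on the same side of $1$ as $q^\ast$, the corresponding formula applies for all $t$ and gives exponential convergence. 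In the two transitional scenarios $q_0>1>q^\ast$ (when $\lambda<\mu$) and $q_0<1<q^\ast$ (when $\lambda>\mu$), the Lyapunov bound together with $F(1)=\lambda-\mu\neq 0$ forces $q_t$ to cross the level $1$ exactly once, transversally, after which the branch formula for the side of $q^\ast$ completes the convergence.

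For the pointwise claim about $q_t\neq 1$, I would split on whether $\lambda=\mu$. When $\lambda\neq\mu$, $F(1)\neq 0$ so every hit of the level $1$ is transverse and hence isolated; the set $\{t:q_t=1\}$ has Lebesgue measure zero. When $\lambda=\mu$ but $q_0\neq 1$, the branch solution $q_t=1+(q_0-1)e^{-ct}$ (with $c=\theta$ or $\mu$ according to the starting side) is the global solution and never equals $1$, while in the degenerate case $\lambda=\mu$ and $q_0=1$ one has $\dot q\equiv 0$, making $q_t\equiv 1$ the unique solution.

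The step I expect to be the main obstacle is the bookkeeping around the kink at $q=1$: one must verify that the two affine branches of $F$ combine into a genuinely Lipschitz function so that existence, uniqueness, and the Lyapunov argument all survive, and that any crossing of $\{q=1\}$ is transverse so that the two branch solutions chain together without ambiguity.
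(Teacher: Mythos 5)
Your argument is correct and ends up in the same place as the paper's, but the mechanics differ enough to be worth noting. The paper never solves the ODE: it works with the Lyapunov function $(q_t-q^*)^2$, checking that its derivative is bounded above by a strictly negative constant whenever $q_t$ is at distance at least $\epsilon$ from the equilibrium (and by $-2(\lambda\wedge\theta)(q_t-1)^2$ in the critical case $\lambda=\mu$). You instead exploit that $F$ is affine on each side of the kink, solve each branch explicitly, and chain the branch solutions across a single transversal crossing of the level $1$; this buys the explicit exponential rate and makes the critical case $\lambda=\mu$ transparent. Your concern about the kink is easily dispatched: $F$ is the pointwise combination of two affine maps agreeing at $q=1$, hence globally Lipschitz, and transversality of any crossing follows from $F(1)=\lambda-\mu\neq0$. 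Two further remarks. First, the paper's proof does not address the third assertion of the lemma at all, whereas you do, and you correctly observe that in the transitional cases ($\lambda>\mu$ with $q_0<1$, or $\lambda<\mu$ with $q_0>1$) the trajectory hits the level $1$ at exactly one instant; so the assertion ``for all $t$, $q_t\neq1$'' is literally false there and must be read as ``for all but at most one $t$'' (equivalently, for Lebesgue--a.e.\ $t$), which is the version actually needed later, and also the version needed to justify ``there exists $t_0$ such that $q_t<1$ for all $t\ge t_0$'' in Section 3. Second, your sentence ``$|q_t-q^*|$ is nonincreasing, hence $q_t\to q^*$'' would be a gap on its own (nonincreasing does not give convergence to $q^*$), but you close it immediately with the explicit branch formulas, so the proof stands.
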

\begin{proof}
Suppose that $\lambda> \mu$\,.
Then by (\ref{eq:11}),
\begin{equation*}
  \frac{d}{dt}\,\bl(q_t-\frac{\lambda-\mu}{\theta}-1\br)^2=
2\bl(q_t-\frac{\lambda-\mu}{\theta}-1\br)
(\lambda-\theta (q_t-1)^+-\mu\,(q_t\wedge 1))\,.
\end{equation*}
If $q_t-(\lambda-\mu)/\theta-1\ge\epsilon$ for $\epsilon>0$, then 
$\lambda-\theta (q_t-1)^+-\mu\,(q_t\wedge 1)=\lambda-\theta
(q_t-1)-\mu \le 
-\theta \epsilon$\,. 
If $q_t-(\lambda-\mu)/\theta-1\le-\epsilon$ 
for $\epsilon\in(0,(\lambda-\mu)/\theta)$, then 
$\lambda-\theta (q_t-1)^+-\mu\,(q_t\wedge 1)
\ge \lambda-\theta (q_t-1)^+-\mu\ge \theta \epsilon$\,. 
Hence, $q_t\to (\lambda-\mu)/\theta+1$ as $t\to\infty$\,.

If $\lambda< \mu$, then a similar reasoning applied  to the
function
$(q_t-\lambda/\mu)^2$ shows that $q_t\to
\lambda/\mu$\,.
Suppose $\lambda=\mu$\,. Then $\dot{q}_t=\mu(1-q_t)^+-\theta(q_t-1)^+$\,.
Hence,
$(d/dt)(q_t-1)^2=2(q_t-1)\bl(\mu(1-q_t)^+-\theta(q_t-1)^+\br)\le
-2(\mu\wedge\theta)(q_t-1)^2$\,. Consequently, $q_t\to 1$\,.

\end{proof}
The Markov chain  $Q^n$ is a birth-and-death process on $\mathbb{Z}_+$
with birth rates $\lambda^n$ and death rates $\mu^n (i\wedge n)+
\theta^n (i-n)^+$\,. Since $\sum_{k=1}^\infty(\lambda^n)^k/
\prod_{i=1}^k\bl(\mu^n(i\wedge n)+
\theta^n (i-n)^+\br)<\infty$,  
it admits a unique stationary distribution which is a limit in 
the distance of total
variation of the transient distributions  for any initial condition.
Let $\hat{Q}^n=(\hat{Q}^n_t,\,t\in\R_+)$ represent the stationary
version of $Q^n$ and let $\hat{q}_0=\lim_{t\to\infty}q_t$\,. 
\begin{theorem}
  \label{the:sta_flui}
Suppose that $\lambda^n/n\to\lambda$, that $\mu^n\to\mu$, and that
$\theta^n\to\theta$ as $n\to\infty$\,. Then, for all $\epsilon>0$
and $L>0$,
\begin{equation*}
  \lim_{n\to\infty}\mathbf{P}(\sup_{t\in[0,L]}\abs{\frac{\hat{Q}^n_t}{n}-\hat{q}_0}>\epsilon)=0\,.
\end{equation*}
\end{theorem}
\begin{proof}
  By part 2(b) of Lemma~\ref{le:bound}, $\sup_{n\in\N}\mathbf{E}
\hat{Q}^n_0/n<\infty$, so 
the sequence $\hat{Q}^n_0/n$ is tight. By Theorem~\ref{the:fluid}, the
sequence of processes $(\hat{Q}^n_t/n,\,t\in\R_+)$ is tight and any
limit point $(q_t,\,t\in\R_+)$ 
for convergence in distribution is  a solution to 
(\ref{eq:11})  for a suitable $\R_+$-valued random variable $q_0$ where,
by Fatou's lemma, $\mathbf{E}q_0<\infty$\,. Since
$(\hat{Q}^n_t/n,\,t\in\R_+)$ is stationary, so is
$(q_t,\,t\in\R_+)$\,. By the proof of
 Lemma~\ref{le:stat-q}, $\abs{q_t-
\hat{q}_0}$  decreases in $t$ and tends to zero
 as $t\to\infty$\,, so by dominated convergence $\mathbf{E}\abs{q_t-
\hat{q}_0}\to 0$\,. By stationarity, $\mathbf{E}\abs{q_t-
\hat{q}_0}=0$\,.
\end{proof}

Let process $\hat{X}^n=(\hat{X}^n(t),\,t\in\R_+)$ represent the
stationary version of $X^n$\,,i.e., $\hat{X}^n(t)=\sqrt{n}(
\hat{Q}^n_t/n-\hat{q}_0)$\,.
\begin{theorem}
  \label{the:stat}
Suppose that $\sqrt{n}(\lambda^n/n-\lambda)\to \alpha$,
$\sqrt{n}(\mu^n-\mu)\to\beta$, and
$\sqrt{n}(\theta^n-\theta)\to \gamma$ as
$n\to\infty$.
Then the processes
$\hat{X}^n$ converge in distribution in $\D(\R_+,\R)$ as $n\to\infty$
to a
stationary continuous-path Markov process
$\hat{X}=(\hat{X}_t,\,t\in\R_+)$.

If  $\lambda<\mu$, then the
process $\hat{X}$ is Gaussian
with  $\mathbf{E}\hat{X}_t=\alpha/\mu-\beta\,\lambda/\mu^2$ and  
$\mathbf{Cov}\,(\hat{X}_u,\hat{X}_v)=(\lambda/\mu)e^{-\mu\abs{u-v}}$. If 
$\lambda>\mu$, then the process $\hat{X}$
is Gaussian 
with $\mathbf{E}\hat{X}_t=\alpha/\theta-\gamma(\lambda-\mu)/\theta^2-\beta/\theta$ and  
$\mathbf{Cov}\,(\hat{X}_u,\hat{X}_v)=(\lambda/\theta)e^{-\theta\abs{u-v}}$.
 If
$\lambda=\mu$, then 
\begin{equation*}
  \hat{X}_t=\hat{X}_0+(\alpha-\beta) t-\theta \int_0^t \hat{X}_s^+\,ds+\mu\int_0^t(- \hat{X}_s)^+\,ds+
\sqrt{2\mu}\hat{W}_t\,,
\end{equation*}
where the  distribution of
$\hat{X}_0$ has density
$C\exp\bl(\bl((\alpha-\beta) x
-(x^2/2)(\theta \ind{x\ge0}
+\mu\ind{x<0})\br)/\mu\br)$\,, $(\hat{W}_t,\,t\in\R_+)$ is a standard
Wiener process, and $\hat{X}_0$ and $(\hat{W}_t,\,t\in\R_+)$ are independent.
\end{theorem}
\begin{proof}
The distributions of the
random variables  $X^n_t$ with $Q^n_0=q_0=0$ converge in the distance of total variation as
$t\to\infty$ to the distribution of $\hat{X}^n_0$\,. By 
part 2(c) of Lemma \ref{le:bound} and Fatou's lemma,
$\sup_{n\in\N}\mathbf{E}(\hat{X}^n_0)^2<\infty$, so  the sequence of the
 distributions 
 of the $\hat{X}^n_0$ is tight. By Theorem~\ref{the:diff}, the
sequence of the distributions  of the $\hat{X}^n$
is tight and any limit point in distribution
$(\grave{X}_t,\,t\in\R_+)$ satisfies the equation
\begin{multline*}
  \grave{X}_t=\grave{X}_0+\bl(\alpha-\gamma(\hat{q}_0-1)^+-
\beta(\hat{q}_0\wedge1)\br)\,t-\int_0^t\theta\bl(\ind{\hat{q}_0>1}\grave{X}_s
+\ind{\hat{q}_0=1} \grave{X}_s^+\br)\,ds\\-\int_0^t\mu
\bl(\ind{\hat{q}_0<1}\grave{X}_s+\ind{\hat{q}_0=1}(\grave{X}_s\wedge 0)
\br)\,ds+
\sqrt{\lambda+\theta (\hat{q}_0-1)^++\mu\,(\hat{q}_0\wedge1)}\,\grave{W}_t,
\end{multline*}
where $(\grave{W}_t,\,t\in\R_+)$ is a standard Wiener process and 
$\grave{X}_0 $ and $(\grave{W}_t,\,t\in\R_+)$ are independent. Since
the $X^n$ are stationary, so is $\grave{X}$\,. Stationary
distributions of one-dimensional diffusions
  are available in the literature, see, e.g., Skorokhod \cite{Sko89}.
\end{proof}
\begin{remark}
  Fleming, Simon, and Stolyar \cite{FleSimSto94} obtain the
   distribution of $\hat{X}_t$ provided $\lambda=\mu$ starting with
an   explicit formula for the stationary distribution of $Q^n$\,.
\end{remark}
  \begin{remark}
    If $\lambda\le \mu$, then the condition that
    $\sqrt{n}(\theta^n-\theta)\to \gamma$ can be disposed of and one
    can merely require that $\theta^n\to\theta$, as in Theorem 2 in
    Garnett, Mandelbaum, and Reiman \cite{GarManRei02}.
  \end{remark}
\begin{remark}
    The limits obtained in Theorems 2.1 and 2.3 in Whitt \cite{Whi04}
    correspond to the case
    where $\lambda^n=n\lambda$, $\mu^n=\mu$, $\theta^n=\theta$, and $\lambda>\mu$, so $\alpha=\beta=\gamma=0$\,.
  \end{remark}
\appendix
\section{Appendix}
\label{sec:appendix}
\begin{lemma}
  \label{le:maj}
Let $(F(t),\,t\in\R_+)$  be a function of locally bounded variation and
$(f(t),\,t\in\R_+)$ be a locally bounded  Lebesgue
measurable function. If a locally integrable function
$(y(t),\,t\in\R_+)$ is such that 
$y(t)\prec F(t)-\int_0^t f(s)y(s)\,ds$, then
\begin{equation*}
  y(t)\le e^{-\int_0^t f(s)\,ds}F(0)+e^{-\int_0^t f(s)\,ds}\int_0^t e^{\int_0^s f(u)\,du}\,dF(s)\,.
\end{equation*}
\end{lemma}
\begin{proof}
  Let
  $g(t)=F(t)-\int_0^t f(s)y(s)\,ds-y(t)$\,. The function
  $(g(t))$ is nondecreasing, $g(0)\ge0$, and 
  \begin{equation*}
y(t)= F(t)-g(t)-\int_0^t f(s)y(s)\,ds\,.
  \end{equation*}
Hence,
\begin{multline*}
  y(t)=e^{-\int_0^t f(s)\,ds}(F(0)-g(0))+e^{-\int_0^t f(s)\,ds}\int_0^t
  e^{\int_0^s f(u)\,du}\,d(F(s)-g(s))\\
\le e^{-\int_0^t f(s)\,ds}F(0)+e^{-\int_0^t f(s)\,ds}\int_0^t
  e^{\int_0^s f(u)\,du}\,dF(s)\,.
\end{multline*}
\end{proof}
The next lemma provides the bounds
that have been   used for the analysis of large-time
behaviour.  Let $T>0$ and
$  \sigma^n_s=\abs{\alpha^n_s-\gamma^n_s(q_s-\kappa_s)^+-\beta^n_s(q_s\wedge\kappa_s)}
+\abs{(\theta^n_s-\mu^n_s)\delta^n_s}\,.$
Let me recall that $q_t$ is defined by \eqref{eq:14}. 
\begin{lemma}
  \label{le:bound}
  \begin{enumerate}
  \item 
    \begin{enumerate}
    \item 
If the functions 
 $(\lambda_t,\,t\in\R_+)$, $(\mu_t,\,t\in\R_+)$,
and $(\theta_t,\,t\in\R_+)$  are
$T$-periodic and
$\int_0^T(\mu_s\wedge \theta_s)\,ds>0$\,, then, for all  $t\in\R_+$,
\begin{equation*}
q_t\le  e^{-\lfloor t/T\rfloor\int_0^T (\mu_s\wedge\theta_s)\,ds}\, q_0+\frac{ e^{\int_0^{T}
  (\mu_s\wedge\theta_s)\,ds}}{1-e^{-\int_0^{T}
  (\mu_s\wedge\theta_s)\,ds}}\int_{0}^{T}\lambda_s\,ds\,.
\end{equation*}
\item
If the functions 
$(\lambda^n_t,\,t\in\R_+)$,  $(\mu^n_t,\,t\in\R_+)$, and $(\theta^n_t,\,t\in\R_+)$ are
$T$-periodic and
$\int_0^T(\mu^n_s\wedge \theta^n_s)\,ds>0$\,, then,
for  all $t\in\R_+$ and $V>0$\,,
  \begin{equation*}
  \mathbf{E}Q^n_t\ind{Q^n_0\le
      V}\le e^{-\lfloor t/T\rfloor\int_0^T (\mu^n_s\wedge\theta^n_s)\,ds}   \,  \mathbf{E}Q^n_0\ind{Q^n_0\le
      V}+
\frac{ e^{\int_0^{T}
  (\mu^n_s\wedge\theta^n_s)\,ds}}{1-e^{-\int_0^{T}
  (\mu^n_s\wedge\theta^n_s)\,ds}}\int_{0}^{T}\lambda^n_s\,ds\,.
\end{equation*}\,.
\item
If the functions 
$(\lambda^n_t,\,t\in\R_+)$, $(\mu^n_t,\,t\in\R_+)$, 
 $(\theta^n_t,\,t\in\R_+)$, 
 $(\lambda_t,\,t\in\R_+)$,  $(\mu_t,\,t\in\R_+)$, and
 $(\theta_t,\,t\in\R_+)$ 
are
$T$-periodic, 
$\int_0^T(\mu_s\wedge \theta_s)\,ds>0$\,, and, for some $\epsilon>0$,
$\int_0^{T}\bl(2(\mu^n_s\wedge \theta^n_s)-
\epsilon\,\sigma^n_s-
(\mu^n_s\vee\theta^n_s)/(2\sqrt{n})\br)\,ds>0$,  
 then,  
for all $t\in\R_+$ and $V>0$, 
\begin{multline*}
    \mathbf{E}(X^n_t)^2\ind{\abs{X^n_0}\le V}\\\le
e^{-\lfloor t/T\rfloor\int_0^T \bl(2(\mu^n_s\wedge \theta^n_s)-\epsilon\,\sigma^n_s-
(\mu^n_s\vee\theta^n_s)/(2\sqrt{n})
\br)\,ds+\int_0^T \abs{2(\mu^n_s\wedge \theta^n_s)-\epsilon\,\sigma^n_s-
(\mu^n_s\vee\theta^n_s)/(2\sqrt{n})}\,ds}
\,\mathbf{E}(X^n_0)^2\ind{\abs{X^n_0}\le V} 
\\+
\frac{ e^{2\int_0^{T}\abs{2(\mu^n_s\wedge \theta^n_s)-\epsilon\,\sigma^n_s-
(\mu^n_s\vee\theta^n_s)/(2\sqrt{n})}\,ds}}{1-e^{-\int_0^{T}\bl(2(\mu^n_s\wedge \theta^n_s)-\epsilon\,\sigma^n_s-
(\mu^n_s\vee\theta^n_s)/(2\sqrt{n})\br)\,ds}}
\int_0^{T}\bl(\frac{1}{\epsilon}\,\sigma^n_s
+\frac{\lambda^n_s}{n}\\+
(\mu^n_s\vee\theta^n_s)\sup_{u\in\R_+}q_u+
\frac{1}{2\sqrt{n}}(\mu^n_s\vee\theta^n_s)\br)\,ds\,.
\end{multline*}
  \end{enumerate}
\item
  \begin{enumerate}
  \item 
If  $\sup_{t\in\R_+}\lambda_t<\infty$ and
  $\inf_{t\in\R_+}( \mu_t\wedge\theta_t)>0$, then, for all $t\in\R_+$,
  \begin{equation*}
    q_t\le
e^{-\int_0^t (\mu_s\wedge\theta_s)\,ds} q_0+ \frac{\sup_{s\in\R_+}\lambda_s}{\inf_{s\in\R_+}
(\theta_s\wedge \mu_s)}\,.
  \end{equation*}
\item
If  $\sup_{t\in\R_+}\lambda^n_t<\infty$ and
  $\inf_{t\in\R_+}( \mu^n_t\wedge\theta^n_t)>0$, then, 
 for all $t\in\R_+$ and $V>0$\,,
  \begin{equation*}
          \mathbf{E}Q^n_t\ind{Q^n_0\le
      V}\le e^{-\int_0^t (\mu^n_s\wedge\theta^n_s)\,ds}\, \mathbf{E}Q^n_0\ind{Q^n_0\le
      V}
+\frac{\sup_{s\in\R_+}\lambda^n_s}{\inf_{s\in\R_+}
(\theta^n_s\wedge \mu^n_s)}\,.
  \end{equation*}
\item 
If $\sup_{t\in\R_+}\lambda_t<\infty$,
  $\inf_{t\in\R_+}( \mu_t\wedge\theta_t)>0$,
$\sup_{t\in\R_+}\lambda^n_t<\infty$, 
$\sup_{t\in\R_+}\theta^n_t<\infty$,
$\sup_{t\in\R_+}\mu^n_t<\infty$, 
$\sup_{t\in\R_+}\sigma^n_t<\infty$,  and
$\inf_{t\in\R_+}\bl(2(\mu^n_t\wedge \theta^n_t)-\epsilon\,\sigma^n_t-
(\mu^n_t\vee\theta^n_t)/(2\sqrt{n})\br)>0$ for some  $\epsilon>0$\,, 
then,
 for all $t\in\R_+$ and $V>0$, 
\begin{multline*}
\mathbf{E}(X^n_t)^2\ind{\abs{X^n_0}\le V}\le
e^{-\int_0^t \bl(2(\mu^n_s\wedge \theta^n_s)-\epsilon\,\sigma^n_s-
(\mu^n_s\vee\theta^n_s)/(2\sqrt{n})
\br)\,ds}
\,\mathbf{E}(X^n_0)^2\ind{\abs{X^n_0}\le V} \\+
\dfrac{\sup_{s\in\R_+}\bl(\sigma^n_s/\epsilon
+\lambda^n_s/n+
(\mu^n_s\vee\theta^n_s)q_s+
(\mu^n_s\vee\theta^n_s)/(2\sqrt{n})\br)}{\inf_{s\in\R_+}\bl(2(\mu^n_s\wedge \theta^n_s)-\epsilon\,
\sigma^n_s
-(\mu^n_s\vee\theta^n_s)/(2\sqrt{n})
\br)}\,.
\end{multline*}
  \end{enumerate}
  \end{enumerate}

\end{lemma}
\begin{proof}
Let me start with part 1(b). 
   Since $Q^n_t\le Q^n_0+A^n_t$ by (\ref{eq:1}), I
have that $\mathbf{E}Q^n_t \ind{Q^n_0\le V}\le
V+\int_0^t\lambda^n_s\,ds$\,. By Lemma~\ref{le:mart}, the processes $(M^{n,A}_t\ind{Q^n_0\le
  V},\,t\in\R_+)$, $(M^{n,R}_t\ind{Q^n_0\le
  V},\,t\in\R_+)$, and $(M^{n,B}_t\ind{Q^n_0\le
  V},\,t\in\R_+)$ are $\mathbf{F}^n$-locally square integrable martingales with
respective predictable quadratic variation processes
$(\ind{Q^n_0\le V}\int_0^t\lambda^n_s\,ds\,,t\in\R_+)$, 
$(\ind{Q^n_0\le V}\int_0^t\theta^n_s\, (Q^n_s-K^n_s)^+\,ds,\,t\in\R_+)$, and
$(\ind{Q^n_0\le V}\int_0^t\mu^n_s\, (Q^n_s\wedge
K^n_s)\,ds,\,t\in\R_+)$\,. Since the latter processes are of finite
expectation, I obtain that $\mathbf{E}(M^{n,A}_t)^2\ind{Q^n_0\le
  V}<\infty$, $\mathbf{E}(M^{n,R}_t)^2\ind{Q^n_0\le
  V}<\infty$, and  $\mathbf{E}(M^{n,B}_t)^2\ind{Q^n_0\le
  V}<\infty$\,. In particular, $(M^{n,A}_t\ind{Q^n_0\le
  V},\,t\in\R_+)$, $(M^{n,R}_t\ind{Q^n_0\le
  V},\,t\in\R_+)$, and $(M^{n,B}_t\ind{Q^n_0\le
  V},\,t\in\R_+)$ are martingales\,. By \eqref{eq:2},
\begin{equation}
  \label{eq:34}
   \mathbf{E}Q^n_t\ind{Q^n_0\le V}\prec \mathbf{E}Q^n_0\ind{Q^n_0\le V}+
\int_0^t\lambda^n_s\,ds-\int_0^t(\mu^n_s\wedge\theta^n_s)
\,\mathbf{E}Q^n_s\ind{Q^n_0\le V}\,ds\,.
\end{equation}
By Lemma~\ref{le:maj},
\begin{equation}
  \label{eq:32}
    \mathbf{E}Q^n_t\ind{Q^n_0\le
      V}
\le
e^{-\int_0^t (\mu^n_s\wedge\theta^n_s)\,ds}     \mathbf{E}Q^n_0\ind{Q^n_0\le
      V}
+e^{-\int_0^t (\mu^n_s\wedge\theta^n_s)\,ds}\int_0^t e^{\int_0^s
  (\mu^n_u\wedge\theta^n_u)\,du}\lambda^n_s\,ds\,.
\end{equation}
If $vT\le t<(v+1)T$, where $v\in\Z_+$, then by $T$-periodicity,
\begin{multline*}
  e^{-\int_0^t (\mu^n_s\wedge\theta^n_s)\,ds}\int_0^t e^{\int_0^s
  (\mu^n_u\wedge\theta^n_u)\,du}\lambda^n_s\,ds\le
e^{-\int_0^{vT} (\mu^n_s\wedge\theta^n_s)\,ds}\sum_{i=1}^{v+1}
 e^{\int_0^{iT}
  (\mu^n_u\wedge\theta^n_u)\,du}\int_{(i-1)T}^{iT}\lambda^n_s\,ds\\
=e^{-v\int_0^{T} (\mu^n_s\wedge\theta^n_s)\,ds}\sum_{i=1}^{v+1}
 e^{i\int_0^{T}
  (\mu^n_u\wedge\theta^n_u)\,du}\int_{0}^{T}\lambda^n_s\,ds
=e^{-(v-1)\int_0^{T} (\mu^n_s\wedge\theta^n_s)\,ds}
\frac{ e^{(v+1)\int_0^{T}
  (\mu^n_u\wedge\theta^n_u)\,du}-1}{e^{\int_0^{T}
  (\mu^n_u\wedge\theta^n_u)\,du}-1}\int_{0}^{T}\lambda^n_s\,ds\\
\le\frac{ e^{2\int_0^{T}
  (\mu^n_u\wedge\theta^n_u)\,du}}{e^{\int_0^{T}
  (\mu^n_u\wedge\theta^n_u)\,du}-1}\int_{0}^{T}\lambda^n_s\,ds\,.
\end{multline*}
In addition, 
$e^{-\int_0^t (\mu^n_s\wedge\theta^n_s)\,ds}\le
e^{-\lfloor t/T\rfloor\int_0^T (\mu^n_s\wedge\theta^n_s)\,ds}$\,. Part
1(b) has been proved.

Part 1(a) follows by a similar argument if one observes that
by \eqref{eq:14},
\begin{equation*}
    q_t\prec q_0+\int_0^t\lambda_s\,ds 
-\int_0^t(\theta_s\wedge \mu_s)q_s\,ds\,
\end{equation*}
so that, by Lemma~\ref{le:maj},
\begin{equation}
  \label{eq:33}
q_t\le e^{-\int_0^t  (\theta_s\wedge \mu_s)\,ds}q_0+
e^{-\int_0^t  (\theta_s\wedge \mu_s)\,ds}
\int_0^te^{\int_0^s  (\theta_u\wedge \mu_u)\,du}\,\lambda_s\,ds\,.
\end{equation}
In order to prove part 1(c), let me note that by
 (\ref{eq:7}), 
\begin{multline*}
      (X^n_t)^2=(X^n_0)^2+
2\int_0^t X^n_{s-}\,dX^n_s+\sum_{0<s\le t}(\Delta X^n_s)^2=
(X^n_0)^2+
2\int_0^t\alpha^n_s X^n_{s}\,ds\\
-2\int_0^t\theta^n_sX^n_{s}\Bl(\bl(X^n_s-\delta^n_s+ \sqrt{n}( q_s-\kappa_s)\br)^+
-\sqrt{n}(q_s-\kappa_s)^+\Br)\,ds
-2\int_0^t\gamma^n_s(q_s-\kappa_s)^+X^n_s\,ds\\-2
\int_0^t\mu^n_sX^n_{s}\Bl(\bl(X^n_s+ \sqrt{n} q_s\br)\wedge(\delta^n_s+ \sqrt{n}\kappa_s) -
\sqrt{n}\,(q_s\wedge\kappa_s)\Br)\,ds-2\int_0^t\beta^n_s(q_s\wedge\kappa_s)X^n_s\,ds
 \\
+\frac{2}{\sqrt{n}}\,\int_0^t X^n_{s-}\,dM^{n,A}_s-
\frac{2}{\sqrt{n}}\,\int_0^t X^n_{s-}\,dM^{n,R}_s
-\frac{2}{\sqrt{n}}\,\int_0^t X^n_{s-}\,dM^{n,B}_s
\\+\frac{1}{n}\,\sum_{0<s\le t}(\Delta M^{n,A}_s)^2+
\frac{1}{n}\,\sum_{0<s\le t}(\Delta M^{n,R}_s)^2+
\frac{1}{n}\,\sum_{0<s\le t}(\Delta M^{n,B}_s)^2\,.
\end{multline*}
On noting that 
\begin{multline*}
  \theta^n_sX^n_{s}\Bl(\bl(X^n_s-\delta^n_s+ \sqrt{n}( q_s-\kappa_s)\br)^+
-\sqrt{n}(q_s-\kappa_s)^+\Br)+\mu^n_sX^n_{s}\Bl(\bl(X^n_s+ \sqrt{n} q_s\br)\wedge(\delta^n_s+ \sqrt{n}\kappa_s) -
\sqrt{n}\,(q_s\wedge\kappa_s)\Br)\\
\ge (\mu^n_s\wedge \theta^n_s)(X^n_s)^2- \abs{(\theta^n_s-\mu^n_s)\delta^n_sX^n_s}\,,
\end{multline*}
I obtain that, for $\epsilon>0$,
\begin{multline*}
(X^n_t)^2 
\prec (X^n_0)^2+
2\int_0^t\sigma^n_s\abs{ X^n_{s}}\,ds
-2\int_0^t( \mu^n_s\wedge\theta^n_s)(X^n_{s})^2\,ds
+\frac{2}{\sqrt{n}}\,\int_0^t X^n_{s-}\,dM^{n,A}_s-
\frac{2}{\sqrt{n}}\,\int_0^t X^n_{s-}\,dM^{n,R}_s\\
-\frac{2}{\sqrt{n}}\,\int_0^t X^n_{s-}\,dM^{n,B}_s
+\frac{1}{n}\,\sum_{0<s\le t}(\Delta M^{n,A}_s)^2+
\frac{1}{n}\,\sum_{0<s\le t}(\Delta M^{n,R}_s)^2+
\frac{1}{n}\,\sum_{0<s\le t}(\Delta M^{n,B}_s)^2\\ \prec (X^n_0)^2+
\frac{1}{\epsilon}\,\int_0^t\sigma^n_s\,ds
-\int_0^t\bl(2(\mu^n_s\wedge
\theta^n_s)-\epsilon\sigma^n_s\br)
(X^n_{s})^2\,ds
+\frac{2}{\sqrt{n}}\,\int_0^t X^n_{s-}\,dM^{n,A}_s-
\frac{2}{\sqrt{n}}\,\int_0^t X^n_{s-}\,dM^{n,R}_s\\
-\frac{2}{\sqrt{n}}\,\int_0^t X^n_{s-}\,dM^{n,B}_s
+\frac{1}{n}\,\sum_{0<s\le t}(\Delta M^{n,A}_s)^2+
\frac{1}{n}\,\sum_{0<s\le t}(\Delta M^{n,R}_s)^2+
\frac{1}{n}\,\sum_{0<s\le t}(\Delta M^{n,B}_s)^2\,.    
\end{multline*}

Hence, for $V>0$,
\begin{multline}
  \label{eq:9}
    (X^n_t)^2\ind{\abs{X^n_0}\le V}\prec (X^n_0)^2\ind{\abs{X^n_0}\le V}+
\frac{1}{\epsilon}\int_0^t\sigma^n_s\ind{\abs{X^n_0}\le V}\,ds
-\int_0^t\bl(2(\mu^n_s\wedge \theta^n_s)-\epsilon\,\sigma^n_s\br)(X^n_{s})^2\ind{\abs{X^n_0}\le V}\,ds\\
+\frac{2}{\sqrt{n}}\,\int_0^t X^n_{s-}\,
dM^{n,A,V}_s-
\frac{2}{\sqrt{n}}\,\int_0^t X^n_{s-}\,
\,dM^{n,R,V}_s
-\frac{2}{\sqrt{n}}\,\int_0^t X^n_{s-}\,
\,dM^{n,B,V}_s\\
+\frac{1}{n}\,\sum_{0<s\le t}(\Delta M^{n,A,V}_s)^2+
\frac{1}{n}\,\sum_{0<s\le t}(\Delta M^{n,R,V}_s)^2+
\frac{1}{n}\,\sum_{0<s\le t}(\Delta M^{n,B,V}_s)^2\,\,    
\end{multline}
where $M^{n,i,V}_s=M^{n,i}_s\ind{\abs{X^n_0}\le V}$, for $i=A,R,B$\,.

By Lemma~\ref{le:mart}, the processes $M^{n,i,V}=(M^{n,i,V}_t,\,t\in\R_+)$ are
$\mathbf{F}^n$-locally square integrable martingales with predictable quadratic variation
processes $\langle M^{n,i,V}\rangle=\langle
M^{n,i}\rangle\ind{\abs{X^n_0}\le V}$\,. 
Since $Q^n_t\le Q^ n_0+A^n_t$ by (\ref{eq:1}) and
$\mathbf{E}(A^n_t)^2=\int_0^t\lambda^n_s\,ds+\bl(\int_0^t\lambda^n_s\,ds\br)^2<\infty$, I have that 
$\mathbf{E}(Q^n_t)^2\ind{\abs{X^n_0}\le V}<\infty$\,. Hence, by (\ref{eq:3a}),
(\ref{eq:3b}), 
(\ref{eq:3c}),
$\mathbf{E}\langle M^{n,i,V}\rangle_t<\infty$, which implies that
$\mathbf{E}\bl(\sup_{s\le t}(M^{n,i,V}_s)^2\br)<\infty$,
that the $M^{n,i,V}$ are $\mathbf{F}^n$-martingales, and that
$\mathbf{E}( M^{n,i,V}_t)^2=\mathbf{E}\langle M^{n,i,V}\rangle_t$\,.
Consequently, the processes
$\bl(\int_0^{t}
 X^n_{s-}\,
\,dM^{n,i,V}_s,\,t\in\R_+\br)$
are $\mathbf{F}^n$-martingales.
 Since the $M^{n,i,V}$ are purely
discontinuous locally square integrable martingales by being of locally
bounded variation,
$\mathbf{E}\sum_{0<s\le t}(\Delta M^{n,i,V}_s)^2=
\mathbf{E}\langle M^{n,i,V}\rangle_{t}$\,.

On  taking expectations in (\ref{eq:9}),
\begin{multline*}
  \mathbf{E}(X^n_{t})^2\ind{\abs{X^n_0}\le V}\prec
  \mathbf{E}(X^n_{0})^2\ind{\abs{X^n_0}\le V} +
\frac{1}{\epsilon}\int_0^t\sigma^n_s\,ds
+\frac{1}{n}\,\bl(\mathbf{E}\langle M^{n,A}\rangle_{t}\ind{\abs{X^n_0}\le V}\\
+\mathbf{E}\langle M^{n,R}\rangle_{t}\ind{\abs{X^n_0}\le V}
+\mathbf{E}\langle M^{n,B}\rangle_{t}\ind{\abs{X^n_0}\le V}\br)
-
\int_0^t\bl(2(\mu^n_s\wedge \theta^n_s)-\epsilon\,\sigma^n_s\br)
\mathbf{E}(X^n_{s})^2\ind{\abs{X^n_0}\le V}\,ds\,.
\end{multline*}
By (\ref{eq:3a}), (\ref{eq:3b}),  (\ref{eq:3c}), and (\ref{eq:6}),
\begin{multline*}
\frac{1}{n}\,\bl(\mathbf{E}\langle M^{n,A}\rangle_{t}\ind{\abs{X^n_0}\le V}
+\mathbf{E}\langle M^{n,R}\rangle_{t}\ind{\abs{X^n_0}\le V}
+\mathbf{E}\langle M^{n,B}\rangle_{t}\ind{\abs{X^n_0}\le V}\br)\\
\prec
\int_0^t\frac{\lambda^n_s}{n}\,ds
+\int_0^{t}(\mu^n_s\vee\theta^n_s)\frac{\mathbf{E}Q^n_s\,\ind{\abs{X^n_0}\le V}\,}{n}
ds\prec
\int_0^t\frac{\lambda^n_s}{n}\,ds
+\int_0^{t}(\mu^n_s\vee\theta^n_s)\bl(\frac{\mathbf{E}X^n_s\ind{\abs{X^n_0}\le V}
}{\sqrt{n}}+q_s\br)\,
ds
\\\prec
\int_0^t\frac{\lambda^n_s}{n}\,ds+
\int_0^{t}(\mu^n_s\vee\theta^n_s)\bl(q_s+
\frac{1}{2\sqrt{n}}\br)\,ds
+\frac{1}{2\sqrt{n}}\int_0^{t}(\mu^n_s\vee\theta^n_s)\,\mathbf{E}(X^n_s)^2\ind{\abs{X^n_0}\le V}
\,ds \,.
\end{multline*}
Thus, for  $t\in\R_+$, 
  \begin{multline*}
      \mathbf{E}(X^n_{t})^2\ind{\abs{X^n_0}\le V}\prec
  \mathbf{E}(X^n_0)^2\ind{\abs{X^n_0}\le V}+
\frac{1}{\epsilon}\int_0^t\sigma^n_s\,ds
+\int_0^t\frac{\lambda^n_s}{n}\,ds+
\int_0^{t}(\mu^n_s\vee\theta^n_s)\bl(q_s+
\frac{1}{2\sqrt{n}}\br)\,ds\\
-
\int_0^t\bl(2(\mu^n_s\wedge \theta^n_s)-\epsilon\,\sigma^n_s-
\frac{1}{2\sqrt{n}}(\mu^n_s\vee\theta^n_s)
\br)\mathbf{E}(X^n_{s})^2\ind{\abs{X^n_0}\le V}\,ds\,.
\end{multline*}
By   Lemma~\ref{le:maj},
\begin{multline}
    \label{eq:30}
      \mathbf{E}(X^n_t)^2\ind{\abs{X^n_0}\le V}\le 
e^{-\int_0^t \bl(2(\mu^n_s\wedge \theta^n_s)-\epsilon\,\sigma^n_s-
(\mu^n_s\vee\theta^n_s)/(2\sqrt{n})
\br)\,ds}
\mathbf{E}(X^n_0)^2\ind{\abs{X^n_0}\le V}\\+e^{-\int_0^t \bl(2(\mu^n_s\wedge \theta^n_s)-\epsilon\,\sigma^n_s-
(\mu^n_s\vee\theta^n_s)/(2\sqrt{n})
\br)\,ds}\int_0^te^{\int_0^s\bl(2(\mu^n_u\wedge \theta^n_u)-\epsilon\,\sigma^n_u-
(\mu^n_u\vee\theta^n_u)/(2\sqrt{n})\br)\,du}\bl(\frac{\sigma^n_s}{\epsilon}\,
+\frac{\lambda^n_s}{n}\\+
(\mu^n_s\vee\theta^n_s)\bl(q_s+
\frac{1}{2\sqrt{n}}\br)\br)\,ds\,.
\end{multline}
In analogy with the earlier argument,
 if $vT\le t<(v+1)T$, where $v\in\Z_+$,
recalling that $\sup_{u\in\R_+}q_u<\infty$ by
part 1(a),
\begin{multline*}
e^{-\int_0^t \bl(2(\mu^n_s\wedge \theta^n_s)-\epsilon\,\sigma^n_s-
(\mu^n_s\vee\theta^n_s)/(2\sqrt{n})
\br)\,ds}\int_0^te^{\int_0^s\bl(2(\mu^n_u\wedge \theta^n_u)-\epsilon\,\sigma^n_u-
(\mu^n_u\vee\theta^n_u)/(2\sqrt{n})\br)\,du}\bl(\frac{\,\sigma^n_s}{\epsilon}
+\frac{\lambda^n_s}{n}\\+
(\mu^n_s\vee\theta^n_s)\bl(q_s+
\frac{1}{2\sqrt{n}}\br)\br)\,ds\le
e^{-v\int_0^{T} \bl(2(\mu^n_s\wedge \theta^n_s)-\epsilon\,\sigma^n_s-
(\mu^n_s\vee\theta^n_s)/(2\sqrt{n})
\br)\,ds+2\int_0^{T} \abs{2(\mu^n_s\wedge \theta^n_s)-\epsilon\,\sigma^n_s-
(\mu^n_s\vee\theta^n_s)/(2\sqrt{n})}\,ds}\\
\sum_{i=0}^{v}e^{i\int_0^{T}\bl(2(\mu^n_u\wedge \theta^n_u)-\epsilon\,\sigma^n_u-
(\mu^n_u\vee\theta^n_u)/(2\sqrt{n})\br)\,du}\int_0^{T}\bl(\frac{1}{\epsilon}\,\sigma^n_s
+\frac{\lambda^n_s}{n}+
(\mu^n_s\vee\theta^n_s)\bl(\sup_{u\in\R_+}q_u+
\frac{1}{2\sqrt{n}}\br)\br)\,ds\\
\le
\frac{ e^{
2\int_0^{T}\abs{2(\mu^n_u\wedge \theta^n_u)-\epsilon\,\sigma^n_u-
(\mu^n_u\vee\theta^n_u)/(2\sqrt{n})}\,du}}{1-e^{-\int_0^{T}\bl(2(\mu^n_u\wedge \theta^n_u)-\epsilon\,\sigma^n_u-
(\mu^n_u\vee\theta^n_u)/(2\sqrt{n})\br)\,du}}
\int_0^{T}\bl(\frac{1}{\epsilon}\,\sigma^n_s
+\frac{\lambda^n_s}{n}+
(\mu^n_s\vee\theta^n_s)\bl(\sup_{u\in\R_+}q_u+
\frac{1}{2\sqrt{n}}\br)\br)\,ds\,,
\end{multline*}
where the last inequality uses the fact that 
$\int_0^{T}\bl(2(\mu^n_u\wedge \theta^n_u)-\epsilon\,\sigma^n_u-
(\mu^n_u\vee\theta^n_u)/(2\sqrt{n})\br)\,du>0$\,.
The latter expression furnishes  the required bound.
Part 1 has been proved.

The assertions of part 2 also follow from the respective inequalities
\eqref{eq:32}, \eqref{eq:33}, and \eqref{eq:30}\,.
For instance  part 2(b) is obtained by applying  the bound
\begin{equation*}
  \int_0^t e^{\int_0^s (\theta^n_u\wedge \mu^n_u)\,du}\,\lambda^n_s\,ds\le
  \frac{\sup_{t\in\R_+}\lambda^n_t}{\inf_{t\in\R_+}
(\theta^n_t\wedge \mu^n_t)}\,\bl(e^{\int_0^t (\theta^n_s\wedge \mu^n_s)\,ds}-1\br)\,.
\end{equation*}
 \end{proof}

\begin{acknowledgements}
I am thankful to the referees for the careful reading of the
manuscript and insightful comments.
\end{acknowledgements}

\def\cprime{$'$} \def\cprime{$'$} \def\cprime{$'$} \def\cprime{$'$}
  \def\cprime{$'$} \def\polhk#1{\setbox0=\hbox{#1}{\ooalign{\hidewidth
  \lower1.5ex\hbox{`}\hidewidth\crcr\unhbox0}}} \def\cprime{$'$}
  \def\cprime{$'$} \def\cprime{$'$}


\begin{thebibliography}{10}
\providecommand{\url}[1]{{#1}}
\providecommand{\urlprefix}{URL }
\expandafter\ifx\csname urlstyle\endcsname\relax
  \providecommand{\doi}[1]{DOI~\discretionary{}{}{}#1}\else
  \providecommand{\doi}{DOI~\discretionary{}{}{}\begingroup
  \urlstyle{rm}\Url}\fi

\bibitem{EthKur86}
Ethier, S.N., Kurtz, T.G.: Markov Processes. Characterization and Convergence.
\newblock Wiley (1986)

\bibitem{FayMalMen95}
Fayolle, G., Malyshev, V.A., Men{\cprime}shikov, M.V.: Topics in the
  constructive theory of countable {M}arkov chains.
\newblock Cambridge University Press, Cambridge (1995)

\bibitem{FleSimSto94}
Fleming, P., Simon, B., Stolyar, A.: Heavy traffic limit for a mobile phone
  system loss model.
\newblock In: Proc. 2nd Internat. Conf. Telecommunication Systems, Modeling,
  and Anal., pp. 158--176. Nashville, TN (1994)

\bibitem{GarManRei02}
Garnett, O., Mandelbaum, A., Reiman, M.: Designing a call center with impatient
  customers.
\newblock Manufacturing Service Oper. Management \textbf{4}(3), 208--227 (2002)

\bibitem{Has80}
Has'minskii, R.: Stochastic Stability of Differential Equations.
\newblock Sijthoff \& Noordhoff (1980).
\newblock (Original title: Ustoicivost' sistem differencial'nyh uravnenii pri
  slucainyh vozmusceniyah ih parametrov, Nauka, Moscow, 1969)

\bibitem{IkeWat}
Ikeda, N., Watanabe, S.: Stochastic Differential Equations and Diffusion
  Processes, 2nd edn.
\newblock North Holland (1989)

\bibitem{jacshir}
Jacod, J., Shiryaev, A.: Limit Theorems for Stochastic Processes,
  \emph{Grundlehren der Mathematischen Wissenschaften [Fundamental Principles
  of Mathematical Sciences]}, vol. 288, 
\newblock Springer-Verlag, Berlin (1987)

\bibitem{lipshir}
Liptser, R., Shiryayev, A.: Theory of Martingales.
\newblock Kluwer (1989)

\bibitem{ManMasReiRidSto02}
Mandelbaum, A., Massey, W., Reiman, M., Rider, B., Stolyar, A.: Queue lengths and
  waiting times for multiserver queues with abandonment and retrials.
\newblock Telecommunication Systems \textbf{21}(2-4),
 149--172, (2002)

\bibitem{ManMasRei98}
Mandelbaum, A., Massey, W.A., Reiman, M.: Strong approximations for {M}arkovian
  service networks.
\newblock Queueing Syst. \textbf{30}, 149--201 (1998)

\bibitem{MeyTwe93}
Meyn, S.P., Tweedie, R.L.: Markov chains and stochastic stability.
\newblock Communications and Control Engineering Series. Springer-Verlag London
  Ltd., London (1993)

\bibitem{Whi07}
Pang, G., Talreja, R., Whitt, W.: Martingale proofs of many-server
  heavy-traffic limits for {M}arkovian queues.
\newblock Probab. Surv. \textbf{4}, 193 -- 267 (electronic) (2007)

\bibitem{Puh07}
Puhalskii, A.A.: The {$M_t/M_t/K_t+M_t$} queue in heavy traffic.
\newblock Available at arXiv.org/abs/0807.4621

\bibitem{Sko89}
Skorokhod, A.V.: Asymptotic Methods in the Theory of Stochastic Differential
  Equations, \emph{Translations of Mathematical Monographs}, vol.~78.
\newblock American Mathematical Society, Providence, RI (1989)

\bibitem{MR838384}
Smorodinski{\u\i}, A.V.: Asymptotic distribution of queue length in a queueing
  system.
\newblock Avtomat. i Telemekh. (2), 92--99 (1986)

\bibitem{TalWhi09}
Talreja, R., Whitt, W.: Heavy-traffic limits for waiting times in many-server
  queues with abandonment.
\newblock Ann. Appl. Probab. \textbf{19}(6), 2137--2175 (2009).
\newblock \doi{10.1214/09-AAP606}.
\newblock
  \urlprefix\url{http://0-dx.doi.org.skyline.ucdenver.edu/10.1214/09-AAP606}

\bibitem{Whi04}
Whitt, W.: Efficiency-driven heavy-traffic approximations for many-server
  queues with abandonment.
\newblock Management Sci. \textbf{50}(10), 1449--1461 (2004)

\bibitem{Whi05}
Whitt, W.: Engineering solution of a basic call center model.
\newblock Management Sci. \textbf{51}(2), 221--235 (2005)

\bibitem{ZelMan05}
Zeltyn, S., Mandelbaum, A.: Call centers with impatient customers: many-server
  asymptotics of the {$M/M/n+G$} queue.
\newblock Queueing Syst. \textbf{51}(3-4), 361--402 (2005)

\end{thebibliography}
\end{document}